\newcommand{\ZZ}{\mathbb{Z}}
\newcommand{\CC}{\mathbb{C}}
\newcommand{\QQ}{\mathbb{Q}}
\newcommand{\NN}{\mathbb{N}}
\newcommand{\sqm}[4]{\displaystyle{
\left({#1 \atop #3}{#2 \atop #4}\right)}}
\newtheorem{Theorem}{Theorem}[section]
\newtheorem{Lemma}[Theorem]{Lemma}
\newtheorem{Proposition}[Theorem]{Proposition}
\newtheorem{Definition}[Theorem]{Definition}
\newtheorem{Remark}[Theorem]{Remark}
\newtheorem{Conjecture}[Theorem]{Conjecture}
\title[Extremal quasi-modular forms]{On extremal quasi-modular forms after Kaneko and Koike}
\author{F. Pellarin, with an appendix by G. Nebe}
\address{Gabriele Nebe\\
	Lehrstuhl D f\"ur Mathematik \\
	RWTH Aachen University \\
52056 Aachen, Germany\\
Federico Pellarin\\
Institut Camille Jordan, UMR 5208\\
Site de Saint-Etienne \\
23 rue du Dr. P. Michelon \\
42023 Saint-Etienne, France}
\date{12/09/2019}
\begin{document}

\maketitle


\begin{small}
\noindent\textbf{Abstract.} Kaneko and Koike introduced the notion of extremal quasi-modular form and 
proposed conjectures on their arithmetic properties. The aim of this note is to prove a rather sharp multiplicity estimate for these quasi-modular forms. The note ends with discussions and partial answers around these conjectures and an appendix by G. Nebe containing the proof of the integrality of the Fourier coefficients of the normalised extremal quasimodular form of weight $14$ and depth $1$.
\end{small}

\medskip

\setcounter{tocdepth}{1}

\section{Introduction}
In \cite{KK2}, Kaneko and Koike introduced the notion of {\em extremal quasi-modular form} and discussed
the potential arithmetic interest of these functions. Let $E_2(z),E_4(z),E_6(z)$ be the Eisenstein series of weights $2,4,6$ respectively, normalised to have 
limit one as the imaginary part $\Im(z)$ of $z$, the variable in the complex upper-half plane $\mathcal{H}$, tends to infinity.  
These functions have quite explicit series expansions in $\ZZ[[q]]$ in terms of the uniformiser $q=e^{2\pi iz}$,
convergent for $|q|<1$:
\begin{equation}\label{E2E4E6}
E_2(z)=1-24\sum_{n\geq 1}\sigma_1(n)q^n,\quad E_4(z)=1+240\sum_{n\geq 1}\sigma_3(n)q^n,\quad
E_6(z)=1-504\sum_{n\geq 1}\sigma_5(n)q^n,
\end{equation} where $\sigma_k(n)$ denotes the sum $\sum_{d|n}d^k$,
over the positive divisors of $n$. It is well known and easy to prove that these functions are
algebraically independent over $\CC$ and the three dimensional polynomial algebra $\widetilde{M}:=\CC[E_2,E_4,E_6]$
is graded by the respective weights $2,4,6$ of $E_2,E_4,E_6$.
A {\em quasi-modular form of weight $w$} is a polynomial in $E_2,E_4,E_6$ homogeneous of weight $w$. In particular such a function $f$, when non-zero, can be 
written in a unique way as
$$f=f_0+f_1E_2+\cdots+f_lE_2^l$$ with $f_i$ a modular form of weight $w-2i$ for all $i$ and $f_l\neq0$.
We refer to the integer $l$ as to the {\em depth} of $f$. Let $\widetilde{M}^{\leq l}_{w}$ be the finite-dimensional vector space
of quasi-modular forms of weight $w\in2\ZZ$ and depth $\leq l$.
We have 
$$\widetilde{M}=\CC[E_2,E_4,E_6]=\bigcup_{l\geq 0}\bigoplus_w\widetilde{M}^{\leq l}_{w}.$$
By (\ref{E2E4E6}) the $\CC$-algebra $\widetilde{M}$ embeds in the 
subring of series of $\CC[[q]]$ which are converging for $|q|<1$. From now on, we identify 
quasi-modular forms with the formal series representing their $q$-expansions (Fourier series expansions). If $f\in\CC[[q]]$,
we set
$$\nu(f):=\text{ord}_{q=0}(f)\in\NN\cup\{\infty\},$$ where $\NN=\{n\in\ZZ:n\geq 0\}$. This is the order of vanishing at $q=0$ of the Fourier series of $f$ and defines a valuation over the $\CC$-algebra $\widetilde{M}$.

The dimension $\delta_l(w)$ of $\widetilde{M}^{\leq l}_{w}$ can be computed in the following way:
$$\delta_l(w)=\sum_{i=0}^ld(w-2i),$$
where $d(w)$ denotes, for $w\in2\ZZ$, the dimension of the $\CC$-vector space $M_w$ of {\em modular forms}
of weight $w$ (which is also equal to $\widetilde{M}^{\leq 0}_{w}$). We recall that 
\begin{equation}\label{d-w}
d(w)=\left\{\begin{matrix}0 \text{ if } w<0\\ \left\lfloor\frac{w}{12}\right\rfloor\text{ if }w\geq 0\text{ and }w\equiv2\pmod{12}\\ \left\lfloor\frac{w}{12}\right\rfloor+1\text{ in all the other cases.}\end{matrix}\right.
\end{equation}
If $w-2l=2$ with $w$ even, $\widetilde{M}^{\leq l}_w$ is non-zero but there are no quasi-modular forms of weight $w$ and depth $l$ because $d(2)=0$.

The following definition is due to Kaneko and Koike, see \cite{KK2}.
\begin{Definition}\label{def-1}{\em A quasi-modular form $f$ of weight $w$ and depth $l$ is {\em extremal} if 
$$\nu(f)=\delta_l(w)-1.$$}\end{Definition}
In \cite{KK2} Kaneko and Koike address the following question.

\medskip

{\em Does there always exist an extremal quasimodular form of given weight $w$ and depth $l$, provided $w$ and $l$ satisfy the necessary constraint $0 \leq 2l \leq w$, $2l\neq w-2$? And is it unique when normalized?} 

\medskip

We cannot answer this question in full generality, but in the present note we show that the answer is affirmative if we suppose that $l\leq 4$.

Assume that $\widetilde{M}^{\leq l}_w\neq\{0\}$. We set:
$$\nu_{\max}(l,w):=\max\{n\in\NN\text{ such that there exists }f\in \widetilde{M}^{\leq l}_w\setminus\{0\}\text{ with }\nu(f)=n\}\in\NN.$$ Note that 
\begin{equation}\label{l-b}\nu_{\max}(l,w)\geq \delta_l(w)-1.\end{equation}
To see this we set $\delta=\delta_l(w)$ and we consider a basis $(g_0,\ldots,g_{\delta-1})$ of $\widetilde{M}^{\leq l}_w$. Writing the $q$-expansion of each element of the basis $g_i=\sum_{j\geq 0}g_{i,j}q^j\in\CC[[q]]$ the matrix
\begin{equation}\label{matrix-gij}
(g_{i,j})_{\begin{smallmatrix}0\leq i\leq \delta-1\\ 0\leq j\leq \delta-2\end{smallmatrix}}
\end{equation} has rank $\leq \delta-1$ which justifies (\ref{l-b}).

\begin{Definition}\label{def-2}{\em A quasi-modular form $f$ of weight $w$ and depth $\leq l$ is {\em analytically extremal} if 
$$\nu(f)=\nu_{\max}(l,w).$$}\end{Definition}
For any $l,w$ such that $\widetilde{M}^{\leq l}_w\neq\{0\}$ there exists a unique normalised analytically extremal quasi-modular form $f_{l,w}\in \widetilde{M}^{\leq l}_{w}\setminus\{0\}$. We may reinforce the terminology by saying that an extremal form in the sense of Definition \ref{def-1} is {\em algebraically extremal}. The attribute 'algebraic' is chosen because the definition seems to rather involve the 
 algebraic structure of the $\CC$-vector space $\widetilde{M}^{\leq l}_{w}$.

\begin{Theorem}\label{firsttheorem}
Let $l$ be in the set $\{0,1,2,3,4\}$ and assume that $0 \leq 2l \leq w$, $2l\neq w-2$.
Then a non-zero quasi-modular form of weight $w$ and depth $l$ is algebraically extremal if and only if it is analytically extremal.
\end{Theorem}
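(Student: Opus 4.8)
The plan is to show that the whole statement is equivalent to the single multiplicity estimate $\nu_{\max}(l,w)=\delta_l(w)-1$, and then to prove this estimate for $l\leq 4$. Indeed, by \eqref{l-b} one always has $\nu_{\max}(l,w)\geq\delta_l(w)-1$, so only the reverse inequality is at stake. Once $\nu_{\max}(l,w)=\delta_l(w)-1$ is known, a nonzero $f$ of weight $w$ and depth $l$ is algebraically extremal, i.e.\ $\nu(f)=\delta_l(w)-1$, if and only if $\nu(f)=\nu_{\max}(l,w)$, i.e.\ it is analytically extremal; this is exactly the asserted equivalence. The hypothesis $0\leq 2l\leq w$, $2l\neq w-2$ forces $d(w-2l)\geq 1$, hence $\delta_{l-1}(w)<\delta_l(w)$, which guarantees in addition that the analytically extremal form cannot sit in $\widetilde{M}^{\leq l-1}_w$ and so has depth exactly $l$. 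Thus everything reduces to the upper bound $\nu_{\max}(l,w)\leq\delta_l(w)-1$.

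To obtain this bound I would translate it into a statement about a Wronskian. Write $\delta=\delta_l(w)$, fix a basis $g_0,\dots,g_{\delta-1}$ of $\widetilde{M}^{\leq l}_w$, and set $W=\det\bigl(D^i g_j\bigr)_{0\leq i,j\leq\delta-1}$, the Wronskian for the operator $D=q\frac{d}{dq}$. Since $D$ acts on $q^n$ by the scalar $n$, an echelon basis with orders $n_0<n_1<\dots<n_{\delta-1}=\nu_{\max}(l,w)$ gives, via a Vandermonde determinant, $\nu(W)=\sum_i n_i$ (and the order of $W$ does not depend on the basis). As $n_i\geq i$, this yields $\nu(W)\geq\binom{\delta}{2}$, with equality if and only if the order sequence has no gap, i.e.\ exactly when $\nu_{\max}(l,w)=\delta_l(w)-1$. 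Hence the multiplicity estimate is equivalent to the sharp identity $\nu(W)=\binom{\delta}{2}$. A weight count (each column carries weight $w$, the $i$-th row adds $2i$) shows that $W$ is quasi-modular of weight $\Omega=\delta(w+\delta-1)$.

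The main obstacle is that for $l\geq 1$ the form $W$ is genuinely quasi-modular of positive depth rather than modular, so the valence formula is not immediately applicable to read off $\nu(W)$: already for $l=1$, $w=6$ the basis $E_6,E_2E_4$ produces a $W$ of weight $14$ with $\nu(W)=1$, which no nonzero modular form of weight $14$ could be. To circumvent this I would replace each $g_j$ by its almost-holomorphic completion $\widehat{g_j}$, transforming exactly with weight $w$; the Wronskian $\widehat{W}$ of the $\widehat{g_j}$ then transforms with weight $\Omega$, so the valence formula does apply to it, and one recovers information on $\nu(W)$ by tracking, through the Ramanujan relations, the non-holomorphic correction relating $\widehat{W}$ to $W$. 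Equivalently, $\widetilde{M}^{\leq l}_w$ is annihilated by a modular linear differential operator of Kaneko--Koike type of order $\delta$, whose exponents at the cusp are precisely $n_0,\dots,n_{\delta-1}$, so that $\nu(W)=\binom{\delta}{2}$ amounts to the exponent-sum (valence) identity for this operator once the contributions at the elliptic points $i$ and $\rho$ are accounted for. I expect this last bookkeeping to be the crux, and the bound $l\leq 4$ to be exactly the range in which the auxiliary weights stay non-negative and the exponent count leaves no room for an extra gap; for $l\geq 5$ this is where the argument, and the Kaneko--Koike question, becomes genuinely open.
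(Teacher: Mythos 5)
Your reduction of the theorem to the single estimate $\nu_{\max}(l,w)=\delta_l(w)-1$ is correct and coincides with the paper's own first step (including the role of the hypothesis $2l\neq w-2$ in forcing depth exactly $l$), and your translation of that estimate into the sharp identity $\nu(W)=\binom{\delta}{2}$ for the Wronskian of an echelon basis is sound, as is your observation (with a correct example in weight $6$) that $W$ is genuinely quasi-modular, so the valence formula does not apply to it. But the proof stops exactly where the theorem lives: you never establish the estimate. You name two possible repairs (almost-holomorphic completion of the basis Wronskian; exponents of an order-$\delta$ differential operator) and then ``expect'' the bookkeeping to single out $l\leq 4$. That expectation is the whole content of the theorem, so this is a genuine gap, not a deferred detail. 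Moreover, your two repairs are quantitatively obstructed. Your $W$ is a $\delta\times\delta$ determinant of weight $\Omega=\delta(w+\delta-1)$, and you must certify $\nu(W)=\binom{\delta}{2}$. Since $\delta_l(w)\sim\frac{(l+1)w}{12}$, one checks that for every $l\geq 2$ and $w$ large, $\binom{\delta}{2}$ \emph{exceeds} $\Omega/12$: for instance for $l=4$, $\binom{\delta}{2}\sim\frac{150}{1728}w^2$ while $\Omega/12\sim\frac{85}{1728}w^2$. No holomorphic modular form of weight $\Omega$ can vanish to order $\binom{\delta}{2}$ at the cusp, so any argument that treats $\widehat{W}$ as ``modular of weight $\Omega$ plus corrections'' must let the corrections carry most of the divisor; neither sketch says how, and there is no valence formula for almost-holomorphic forms that is depth-insensitive. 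Likewise, the order-$\delta$ operator annihilating $\widetilde{M}^{\leq l}_w$ has quasi-modular coefficient ratios, hence does not descend to the modular curve, and ``exponents at $i$ and $\rho$'' are not defined for it; also $\widetilde{M}^{\leq l}_w$ is not stable under the weight-$w$ slash action, so it cannot be the solution space of a Kaneko--Koike-type equation.

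The paper's mechanism is different and sidesteps all of this: instead of a basis, it attaches to a \emph{single} form $f$ of depth $\leq l$ the vector-valued modular form $F_f$ for the $l$-th symmetric power $\rho_l$ (the vector of $x$-derivatives of $Q_f(x)=x^lP_f(1/x)$ at $x=z$), whose $(l+1)\times(l+1)$ $D$-Wronskian $W_f$ is an honest \emph{scalar} modular form of weight $(l+1)w$, nonzero when $f\neq 0$, with $\nu(W_f)\geq\nu(f)$. Then $\nu(f)\leq d((l+1)w)-1$, and the elementary combinatorial identity $d((l+1)w)=\delta_l(w)$, valid precisely for $l\in\{0,1,2,3,4\}$ (Lemma \ref{lemma-kappa}, i.e.\ $\kappa_l=0$), yields $\nu_{\max}(l,w)\leq\delta_l(w)-1$. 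This identity is the true reason for the cutoff at depth $4$; nothing playing its role appears anywhere in your proposal, which is why your argument cannot locate where $l\leq 4$ enters.
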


This is a direct consequence of Theorem \ref{thirdtheorem} below and answers Kaneko and Koike's question from above, in the case $l\leq 4$. 
For $l$ in this range, the definitions \ref{def-1} and \ref{def-2} therefore coincide and there is no need to specify 'algebraically' or 'analytically' as an attribute for extremality. 

In \cite{KK2}, Kaneko and Koike ask whether an element in $\widetilde{M}_k^{\leq l}$ is uniquely determined by its first $\delta_l(k)$ Fourier coefficients and if one can prescribe these coefficients arbitrarily. In other words, 
they ask if there exists a basis of $\widetilde{M}_k^{\leq l}$ the $q$-expansions of the elements of which agree with the canonical diagonals basis $(1,q,q^2,\ldots)$ of the vector space $\CC[[q]]$ on its first $\delta_l(w)$ elements
(we may call this a 'diagonal basis' of $\widetilde{M}_k^{\leq l}$). For $l=0$ we have the so-called Miller bases which answer positively to this question; they can be easily constructed by the fact that the algebra of modular forms has dimension $2$. For higher depth we do not know a general answer to the question but Theorem \ref{thirdtheorem} implies that such bases exist for $l\leq 4$. 
Indeed, completing the matrix in (\ref{matrix-gij}) by adding a column, $U=(f_{i,j})_{0\leq i,j\leq \delta-1}$ is non-singular, because otherwise we would be able to construct, by linear combination, a non-zero element $f\in\widetilde{M}_k^{\leq l}\setminus\{0\}$ such that 
$\nu(f)>\nu_{\max}(l,w)=\delta_l(w)-1$, which is impossible (the last equality follows from Theorem \ref{firsttheorem}). Hence 'diagonal bases' for $\widetilde{M}_k^{\leq l}$, $l\leq 4$, exist. 

\section{A multiplicity estimate}

How large can be $\nu_{\max}(l,m)$? Classically, a simple {\em multiplicity estimate} holds on $f\in\widetilde{M}^{\leq l}_w$. For example it is easy to show, by using an elementary resultant argument that:
$$\nu_{\max}(l,w)\leq3\delta_l(w).$$ The main result of this section is Theorem \ref{thirdtheorem}, where we prove a rather sharp multiplicity estimate for quasi-modular forms.
For $w\in \ZZ$ we consider the difference $$\kappa_l(w):=d((l+1)w)-\delta_l(w)=\dim_\CC(M_{(l+1)w})-\dim_\CC(\widetilde{M}^{\leq l}_w)\in\ZZ.$$ We have the following elementary lemma.
\begin{Lemma}\label{lemma-kappa}
For all $l\geq 0$ the sequence $(\kappa_l(w))_{w\geq 0, 2\mid w}$ is non-negative, increasing and there exists an integer $0\leq \kappa_l\leq 6^{-1}(3+l)(4+l)$ such that for all $w\geq 2l+12$, $\kappa_l(w)=\kappa_l$. Moreover, $\kappa_l(w)=\kappa_l=0$ for all $w\in 2\ZZ$ if $l\in\{0,1,2,3,4\}$.
\end{Lemma}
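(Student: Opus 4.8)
The plan is to treat $\kappa_l(w)$ as an eventually periodic arithmetic function of the even integer $w$, using only the explicit formula \eqref{d-w}. For $w\geq 2l$ every argument $w-2i$ with $0\leq i\leq l$ is non-negative, so I write $d(u)=\tfrac{u}{12}+c(u)$, where the correction $c(u)$ depends only on $u\bmod 12$ and, by \eqref{d-w}, takes its six values in $[-\tfrac16,1]$. Substituting into $\kappa_l(w)=d((l+1)w)-\sum_{i=0}^{l}d(w-2i)$ and using $\sum_{i=0}^{l}\tfrac{w-2i}{12}=\tfrac{(l+1)w}{12}-\tfrac{l(l+1)}{12}$, the terms linear in $w$ cancel and there remains
$$\kappa_l(w)=\frac{l(l+1)}{12}+c((l+1)w)-\sum_{i=0}^{l}c(w-2i).$$
Because $(l+1)(w+12)\equiv(l+1)w$ and $w+12\equiv w\pmod{12}$, this expression depends only on $w\bmod 12$ once $w\geq 2l$; in particular $\kappa_l(w+12)=\kappa_l(w)$ there, so the sequence is eventually periodic with period dividing $12$.

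Next I would prove monotonicity, $\kappa_l(w+2)\geq\kappa_l(w)$. The telescoping identity $\delta_l(w+2)-\delta_l(w)=d(w+2)-d(w-2l)$ (valid for all $w\geq 0$ with the convention $d(u)=0$ for $u<0$) turns this into
$$d\big((l+1)w+2(l+1)\big)-d\big((l+1)w\big)\ \geq\ d(w+2)-d(w-2l),$$
i.e.\ the increment of $d$ across the window of $l+1$ even steps starting at $(l+1)w$ must dominate its increment across the window of $l+1$ even steps ending at $w+2$. Writing each increment as a difference of floors $\lfloor\,\cdot\,/12\rfloor$ plus the bounded corrections of \eqref{d-w}, the two floor-differences agree up to $1$ and the inequality follows from a case analysis on the residues modulo $12$ of the endpoints (with the few values $0\leq w<2l$, where the boundary term $d(w-2l)$ vanishes, checked directly). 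This step is the technical heart and the real obstacle, precisely because $d$ is not monotone: the dip at $w\equiv2\pmod{12}$ is what one must keep track of through the cases.

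Everything else follows formally. Periodicity for $w\geq 2l$ together with the chain $\kappa_l(w)\leq\kappa_l(w+2)\leq\cdots\leq\kappa_l(w+12)=\kappa_l(w)$ forces $\kappa_l(w)$ to be constant for $w\geq 2l$, hence a fortiori for $w\geq 2l+12$; I name this common value $\kappa_l$. Non-negativity drops out of monotonicity and the boundary value $\kappa_l(0)=d(0)-\sum_{i=0}^{l}d(-2i)=1-1=0$. For the upper bound I feed the range $c\in[-\tfrac16,1]$ into the closed form for $\kappa_l(w)$ above: $c((l+1)w)\leq 1$ and $-\sum_{i=0}^{l}c(w-2i)\leq\tfrac{l+1}{6}$ give $\kappa_l\leq\tfrac{l(l+1)}{12}+1+\tfrac{l+1}{6}$, which is $\leq\tfrac16(3+l)(4+l)$ since $(l+1)(l+10)\geq 0$. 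Finally, for $l\in\{0,1,2,3,4\}$ I evaluate the stable value $\kappa_l$ at a single convenient argument (say $w\equiv0\pmod{12}$ with $w\geq 2l$), obtaining $\kappa_l=0$; then $0=\kappa_l(0)\leq\kappa_l(w)\leq\kappa_l=0$ gives $\kappa_l(w)=0$ for every even $w\geq 0$.
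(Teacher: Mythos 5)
Your individual reductions are correct: the closed form $\kappa_l(w)=\tfrac{l(l+1)}{12}+c((l+1)w)-\sum_{i=0}^{l}c(w-2i)$ and the resulting $12$-periodicity for $w\geq 2l$ (the paper's $d(u+12)=d(u)+1$ argument in different clothing), the telescoping identity $\delta_l(w+2)-\delta_l(w)=d(w+2)-d(w-2l)$, the ``periodic and increasing implies constant'' trick, the upper-bound arithmetic, and the plan of getting the $l\leq 4$ case from constancy plus one evaluation. The problem is that in your architecture \emph{everything} --- non-negativity, eventual constancy, and the $l\leq 4$ vanishing --- is routed through the $2$-step monotonicity $\kappa_l(w+2)\geq\kappa_l(w)$, and that step, which you yourself call the technical heart, is asserted rather than proved. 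Two concrete gaps. First, for $w\geq 2l$ the required inequality $c((l+1)(w+2))-c((l+1)w)\geq c(w+2)-c(w-2l)$ is \emph{false} if the residues of the four endpoints are treated as independent: the corrections range over $[-\tfrac16,1]$, so one side can lose $\tfrac76$ while the other gains $\tfrac76$. What saves it is that $(l+1)w \bmod 12$ is determined by $(l+1)\bmod 6$ and $w\bmod 12$, so the correct case analysis runs over these $36$ cases, and in all of them one finds \emph{equality} (the sequence is constant, not just non-decreasing, on $w\geq 2l$); none of this appears in your text. Second, and more seriously, the range $0\leq w<2l$ is not ``a few values checked directly'': it is an unbounded family of pairs $(l,w)$, the inequality there reads
$$d\bigl((l+1)(w+2)\bigr)-d\bigl((l+1)w\bigr)\ \geq\ d(w+2),$$
and at $w=2l-2$ the two sides are equal for every $l$, so there is zero slack and no crude bound can close it (feeding $c\in[-\tfrac16,1]$ into both sides leaves a deficit of up to $2$, as your own estimate shows). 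This range needs its own uniform residue argument exploiting the same multiplicative linkage; deferring it to a ``direct check'' leaves the heart of the lemma unproven, and with it every downstream claim of your proposal.

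For contrast, the paper's route does not need $2$-step monotonicity for anything it later uses: from $d(u+12)=d(u)+1$ for $u\geq 0$ together with $d(v)\leq 1$ for $0\leq v\leq 10$ one gets the $12$-step inequality $\kappa_l(w+12)\geq\kappa_l(w)$ for \emph{all} $w\geq 0$ (with equality once $w\geq 2l$), which already gives $\kappa_l(w)\leq\kappa_l$ for every $w$; and the vanishing for $l\in\{1,2,3,4\}$ is obtained for all $w$ simultaneously from the identities $d(2w)=d(w)+d(w-2)$, \dots, $d(5w)=d(w)+\cdots+d(w-8)$, with no monotonicity input at all, so the part of the lemma the rest of the paper relies on stands independently. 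To repair your proof, either carry out both case analyses above in full (the $36$ stable cases and a uniform treatment of $0\leq w<2l$), or switch to the paper's $12$-step monotonicity and prove the $l\leq 4$ statement via those dimension identities.
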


\begin{proof}
We note that if $w>12$ then $d(w)=1+d(w-12)$. Hence, if $w\geq 2l+12$ we can write $w=w'+12$ with $w'-2i\geq 0$ for all $i\leq l$ and:
$$\kappa_l(w)=l+1+d((l+1)w')-(l+1)-\sum_{i=0}^ld(w'-2i)=\kappa_l(w').$$
From this computation we also see that the sequence is increasing. Moreover, since $d(w)\leq\frac{w}{12}+1$ for all $w\geq 0$ by (\ref{d-w}), we have the trivial upper bound $$\kappa_l(w)\leq \frac{(l+1)(2l+12)}{12}+1$$ if $w\geq 0$ which yields the one for $\kappa_l$. Finally, the fact that $\kappa_l(w)=0$ for all $w\geq 0$ and for all $l\in\{0,1,2,3,4\}$ is trivial for $l=0$ and otherwise follows from the following identities, valid for any $w\in 2\ZZ$:
\begin{eqnarray*}
d(2w)&=&d(w)+d(w-2),\\
d(3w)&=&d(w)+d(w-2)+d(w-4),\\
d(4w)&=&d(w)+d(w-2)+d(w-4)+d(w-6),\\
d(5w)&=&d(w)+d(w-2)+d(w-4)+d(w-6)+d(w-8),\\
\end{eqnarray*}
which can be proved with elementary computations using (\ref{d-w}) and are left to the reader.
\end{proof}

\begin{Remark}
{\em The first coefficients of the sequence $\kappa_l$ are
$$0,0,0,0,0,1,1,2,3,4,5,7,8,10,12,14,16,19\ldots$$
Let $a(n)$ be the cardinality of the set $\{(i,j,k)\in\NN^3:i+2j+3k=n\}$ (set to zero if $n\leq 0$). Then, the first coefficients of the sequence $\kappa_l$ agree with the first coefficients of the sequence $a(n-6)$.
Numerical experiments suggest that 
$$\kappa_l(w)=a(l)-a(l-w/2),\quad \forall l\geq 0,\quad w\in2\ZZ.$$}
\end{Remark}
\begin{Theorem}[Multiplicity estimate]\label{thirdtheorem} 
The following inequality holds:
$$\nu_{\max}(l,w)\leq \delta_l(w)-1+\kappa_l.$$
\end{Theorem}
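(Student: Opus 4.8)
The plan is to prove the sharper-looking bound $\nu_{\max}(l,w)\le d((l+1)w)-1$ and to deduce the statement from Lemma~\ref{lemma-kappa}: since $\kappa_l(w)=d((l+1)w)-\delta_l(w)$ and $\kappa_l(w)\le\kappa_l$, one has $d((l+1)w)-1=\delta_l(w)-1+\kappa_l(w)\le\delta_l(w)-1+\kappa_l$. It is moreover harmless to assume that $f$ has depth exactly $l$: a form of depth $l'<l$ lies in $\widetilde{M}^{\le l}_w$ and the bound $\nu(f)\le d((l'+1)w)-1\le d((l+1)w)-1$ proved for its own depth is even stronger. So everything reduces to showing that a depth-$l$, weight-$w$ quasi-modular form cannot vanish at the cusp to order larger than $\dim_\CC M_{(l+1)w}-1$, the maximal order of vanishing allowed for a genuine modular form of weight $(l+1)w$.

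To manufacture a modular form of that weight out of $f$ I would use the second order equation linearising Ramanujan's system. With $D=q\frac{d}{dq}$ one has $D\log\eta^{-2}=-\tfrac1{12}E_2$, so that $y:=\eta^{-2}$ solves $D^2y=\tfrac1{144}E_4\,y$; this is precisely the linearisation of $DE_2=\tfrac1{12}(E_2^2-E_4)$ under $E_2=-12\,Dy/y$. Since a depth-$l$ form is a degree-$l$ polynomial in $E_2$ over the modular forms, this substitution places $f$ inside the $(l+1)$st symmetric power of the rank-two differential module spanned by $y$ and its logarithmic companion. I would then attach to $f$ the corresponding $\mathrm{Sym}^l$-valued holomorphic modular form $\vec F=(F_0,\dots,F_l)$ (of weight $w-l$, for the $\eta$-twisted monodromy representation of $\mathrm{SL}_2(\ZZ)$) and form its Wronskian $\mathcal W=\det\bigl(D^iF_j\bigr)_{0\le i,j\le l}$. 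A weight count gives $(l+1)(w-l)+l(l+1)=(l+1)w$, and as $\det\mathrm{Sym}^l$ is trivial on $\mathrm{SL}_2(\ZZ)$ the $\eta$-multipliers carried by the $F_j$ cancel, so that $\mathcal W\in M_{(l+1)w}$.

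The estimate would then come from squeezing $\nu(\mathcal W)$ between two bounds. The valence formula gives $\nu(\mathcal W)\le d((l+1)w)-1$. In the other direction, the differential module attached to $f$ is regular at the cusp and holomorphic there, so the order of $\mathcal W$ at $q=0$ equals the sum of the local exponents of the symmetric power and in particular dominates the exponent carried by $f$ itself, which is $\nu(f)$. Combining the two inequalities gives $\nu(f)\le d((l+1)w)-1$, and Lemma~\ref{lemma-kappa} finishes the proof.

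The main obstacle is the middle step. One must check that the logarithmic (``period'') companions of $f$ assemble into a \emph{holomorphic} $\mathrm{Sym}^l$-valued form whose total multiplier is trivial, so that $\mathcal W$ is an honest modular form rather than a merely nearly holomorphic object, and that $\mathcal W\neq0$ --- which holds exactly because $f$ has depth $l$, forcing $F_0,\dots,F_l$ to be linearly independent. One must also control the local exponents at the two elliptic points $i$ and $\rho$ (the zeros of $E_6$ and $E_4$), where the symmetric-power equation is singular as well, to ensure that they contribute nonnegatively and do not disturb the weight bookkeeping. Once these analytic points are secured, the chain $\nu(f)\le\nu(\mathcal W)\le d((l+1)w)-1$ yields the theorem.
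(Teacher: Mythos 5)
Your overall skeleton coincides with the paper's proof: reduce, via Lemma \ref{lemma-kappa}, to the bound $\nu(f)\le d((l+1)w)-1$; attach to $f$ a holomorphic vector-valued modular form of weight $w-l$ for an $(l+1)$-dimensional symmetric-power representation; and squeeze the vanishing order of its Wronskian $\mathcal W$, a non-zero element of $M_{(l+1)w}$, between $\nu(f)$ and $d((l+1)w)-1$. The problem is that what you defer as ``the main obstacle'' \emph{is} the proof, and the $\eta$-twisted route you sketch does not deliver the two key facts as stated. First, the premise ``the differential module attached to $f$ is regular at the cusp and holomorphic there'' is false in your normalisation: $y=\eta^{-2}=q^{-1/12}(1+\cdots)$ and its companion solution have exponents $\mp\tfrac{1}{12}$ at $q=0$, so the objects built from them live in fractional-power series with negative leading exponents; hence the step ``$\nu(\mathcal W)$ equals the sum of the local exponents and dominates $\nu(f)$'' is precisely what needs proof (one must show every companion exponent contributes non-negatively), and nothing in the sketch provides it. Second, the multiplier bookkeeping is not settled by the triviality of $\det\operatorname{Sym}^l$: if the components $F_j$ carry a common scalar multiplier $\chi$ on top of the representation, the Wronskian transforms with $\chi^{l+1}$, which need not be trivial (for $l=1$ and $\chi$ a power of the $\eta$-multiplier it is not). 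What is true is that the monodromy representation of $D^2y=\frac{1}{144}E_4y$ has determinant $1$, but note that this representation is \emph{not} an $\eta$-twist of the standard one: $\CC\,\eta^{-2}$ is an invariant line, so it is a non-split extension of two characters, and one must still check that the $F_j$ carry no extra overall twist. Third, the linear independence of the $F_j$ (hence $\mathcal W\neq0$) is asserted rather than derived. By contrast, your worry about elliptic points is empty: the equation $y''=-\frac{\pi^2}{36}E_4y$ has holomorphic coefficients on $\mathcal H$, so everything in sight is holomorphic there and the valence formula needs no correction.

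The paper closes exactly this gap by a normalisation that avoids $\eta$ altogether. By the functional equation (\ref{quasimod}), $f$ determines a polynomial $P_f\in\CC[[q]][X]$ whose coefficients are quasi-modular forms (so holomorphic at the cusp, with no multipliers) and whose constant term is $f$; setting $Q_f(x)=x^lP_f(1/x)$ and $F_f=\bigl(\partial_x^jQ_f|_{x=z}\bigr)_{0\le j\le l}$ yields a holomorphic vectorial modular form of weight $w-l$ for $\rho_l$ itself, whose components are polynomials in $z$ with coefficients in $\CC[[q]]$, the last one being $l!\,f$. Consequently $\mathcal W=W(F_f)$ lies in $M_{(l+1)w}$ with trivial multiplier, is non-zero as soon as $f\neq0$ (because $Q_f$ has degree exactly $l$, its leading coefficient being $f$, which forces the independence of the components), and satisfies $\nu(\mathcal W)\ge\nu(f)$ by expanding the determinant along the row $(l!f,\,l!Df,\dots,l!D^lf)$ and retaining the $z$-free part. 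Unless you supply this (or an equivalent) explicit normalisation, your middle step remains a genuine gap rather than a verification left to the reader.
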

\begin{proof}
For $\gamma=\sqm{a}{b}{c}{d}\in\Gamma:=\operatorname{SL}_2(\ZZ)$ and $z\in\mathcal{H}$
the complex upper-half plane, we write $$J_\gamma(z)=cz+d\quad\text{ and }L_\gamma(z)=\frac{c}{cz+d}.$$
We will use the identity map $\rho_{1}:\operatorname{SL}_2(\ZZ)\rightarrow \operatorname{SL}_2(\ZZ)$ so that 
$\rho_{1}(\gamma)=\gamma$, 
and its {\em symmetric powers} of order $l\geq 1$:
$$\rho_{l}=S^l(\rho_{1}):\operatorname{SL}_2(\ZZ)\rightarrow \operatorname{SL}_{l+1}(\ZZ),$$
realised in the space of polynomial homogeneous of degree $s=l+1$ with coefficients in $\CC$:
$$\rho_{l}\left(\sqm{a}{b}{c}{d}\right)(X^{s-r}Y^r)=(aX+cY)^{s-r}(bX+dY)^r.$$
For example, for $\gamma$ as above, in the basis $(X^2,XY,Y^2)$:
$$\rho_2(\gamma)=\left(\begin{array}{ccc}a^2 & ab & b^2\\
2ac & ad+bc & 2bd\\ c^2 & cd& d^2\end{array}\right).$$

Let us also consider the derivation $D$ of $\CC((q))$ induced by: $$D=(2\pi i)^{-1}\frac{d}{dz}=q\frac{d}{dq}.$$
Then we have the Ramanujan's differential system:
\begin{equation}\label{ramanujan}
D(E_2)=\frac{1}{12}(E_2^2-E_4),\quad D(E_4)=\frac{1}{3}(E_2E_4-E_6),\quad D(E_6)=\frac{1}{2}(E_2E_6-E_4^2).\end{equation}
Let $f\in\widetilde{M}^{\leq l}_w$ be a quasi-modular form of weight $w$ and depth $\leq l$. There exists, uniquely determined, a polynomial $$P_f\in\operatorname{Hol}(\mathcal{H})[X]$$
of degree equal to the depth of $f$ with coefficients which are holomorphic functions $\mathcal{H}\rightarrow\CC$, such that
for all $\gamma\in\Gamma$ and for all $z\in\mathcal{H}$, we have the functional equation:
\begin{equation}\label{quasimod}
f(\gamma(z))=J_\gamma(z)^w P_f(L_\gamma(z)).\end{equation} It is plain that the coefficients of $P_f$ are quasi-modular forms. In particular, we can identify $P_f$ with an element of $\CC[[q]][X]$. Note that additionally, the constant term of $P_f$ is equal to $f$. We write $Q_f(x)=x^lP_f(1/x)\in\CC[[q]][x]$ and we set:
$${F_f}(z)=\begin{pmatrix}Q_f\\ \frac{\partial Q_f}{\partial x}\\ \frac{\partial^2 Q_f}{\partial x^2}\\
\frac{\partial^3 Q_f}{\partial x^3}\\ \vdots \\ \frac{\partial^{l} Q_f}{\partial x^{l}}\end{pmatrix}_{x=z}.$$
This defines a (weak) vectorial modular form
of weight $w-l$ associated to $\rho_{l}$. In other words, the above is a holomorphic function
$\mathcal{H}\rightarrow\CC^l$ (column) which satisfies the following property: for all $\gamma\in\operatorname{SL}_2(\ZZ)$
and for all $z\in\mathcal{H}$,
$$F_f(\gamma(z))=J_\gamma(z)\rho_{l}(\gamma)\cdot F_f(z).$$
The dimension of the target space of $F_f$ depends on $l$ but not on the depth of $f$ if it is $<l$.
The $D$-Wronskian 
$$W_f(z)=\det({F}(z),D({F})(z),\ldots,D^{l-1}({F})(z))$$
is easily seen to be a modular form of weight $w(l+1)$ and furthermore, we have $\nu(W({F_f}))\geq\nu(f)$.
To fix the ideas of the construction, the reader can check the following formula by using (\ref{ramanujan}): $$W(F_{E_2})=-\frac{E_4}{2\pi i}.$$
If $f\in\widetilde{M}^{\leq l}_w$ is non-zero we have that $Q_f$ has degree $l$ in $X$ and 
the functions $$\left(\frac{\partial^j Q_f}{\partial x^j}\right)_{x= z},\quad j=0,\ldots,l$$
are linearly independent over $\CC$ so that $W(F_f)\in M_{(l+1)w}\setminus\{0\}$.
We now look at $W(F_f)$ with $f=f_{l,w}$. By Lemma \ref{lemma-kappa}:
\begin{eqnarray*}
\delta_l(w)-1&\geq&d((l+1)w)-1-\kappa_l\\
&\geq&\nu(W({F_{f_{l,w}}}))-\kappa_l\\
&\geq&\nu_{\max}(l,w)-\kappa_l.
\end{eqnarray*}

If $l\in\{0,\ldots,4\}$, then $\kappa_l=0$. 
Therefore,
$\nu_{\max}(l,w)=\delta_l(w)-1$.
\end{proof}

\begin{proof}[Proof of Theorem \ref{firsttheorem}]
We suppose that $l\in\{0,1,2,3,4\}$. Lemma \ref{lemma-kappa} tells us that $\kappa_l(w)=\kappa_l=0$
for all $w\in 2\ZZ$, $w\geq 0$. Combining (\ref{l-b}), Theorem \ref{thirdtheorem} we see that $\nu_{\max}(l,w)=\delta_l(w)-1$. Hence an algebraically extremal quasi-modular form of weight $w$ and depth $l$ is also 
analytically extremal.

Let us consider $w\in 2\NN$.
We have the flag of vector spaces
$$M_w\subsetneq\widetilde{M}_w^{\leq 1}\subsetneq\cdots\subsetneq\widetilde{M}_w^{\leq \frac{w}{2}-2}\subsetneq\widetilde{M}_w^{\leq \frac{w}{2}}$$
which implies $\nu_{\max}(0,w)<\nu_{\max}(1,w)<\cdots<\nu_{\max}(\frac{w}{2}-2,w)<\nu_{\max}(\frac{w}{2},w)$.
This means that if $l\neq\frac{w}{2}-1$, an analytically extremal quasi-modular form is algebraically extremal.
\end{proof}

\begin{Remark}{\em For $l\leq 4$ we have $\dim_\CC(\widetilde{M}_w^{\leq l})=\dim_\CC(M_{(l+1)w})$ but we have not constructed natural isomorphisms $\widetilde{M}_w^{\leq l}\cong M_{(l+1)w}$. Hence, 'diagonal bases'
exist for $l\leq 4$ but we have not provided a way to construct them explicitly.
If $V:=\widetilde{M}_w^{\leq l}\neq\{0\}$ the map $f\mapsto W(F_f)$
is homogeneous of weight $l+1$ over $V$ (for example, if $l=1$, it is quadratic). We consider its polarisation
$$V^{\oplus(l+1)}\xrightarrow{\Phi}M_{(l+1)w}.$$
We address the following problem. Characterise the $l$-tuples $(f_1,\ldots,f_l)\in V^{\oplus l}$ such that the map $V\ni f\mapsto \Phi(f,f_1,\ldots,f_l)\in M_{(l+1)w}$ is an isomorphism of $\CC$-vector spaces.}\end{Remark}

\section{Further remarks in depth one}

It belongs to Kaneko and Koike the discovery (in \cite{KK2}) that (algebraically) extremal quasi-modular forms of weight one and two are solutions of linear differential equations belonging to one-parameter families of {\em hypergeometric type}. One of the reasons for which the terminology 'hypergeometric' is used is that moreover, 
the (algebraically) extremal forms (in depth one and two) can also be described inductively by using certain 
{\em contiguity equations} similar to those of Gauss' hypergeometric function first observed by Kaneko and Koike (see Lemma \ref{propo}). The author noticed, in other works, that these contiguity equations can also be viewed as an avatar of an analytic family of Drinfeld modules of rank two. 

From now on, we focus on the case of depth one and weight multiple of six. We write:
$$\Delta=\frac{E_4^3-E_6^2}{1728}\in M_{12}$$ and we set $D=D_1$.
The following is a simple consequence of \cite[Theorem 2.1 part (1)]{KK2} observing that $D(\Delta)=E_2\Delta$ (which is clear from (\ref{ramanujan})). The symbol $\Delta^{1/2}$ denotes the unique normalised square root of $\Delta$ in $u\CC[[q]]$, where $u:=q^{1/2}=e^{\pi i z}$. This can also be viewed as a holomorphic, nowhere vanishing function over $\mathcal{H}$.
\begin{Proposition}\label{prop12}
If $w=6k$, $k\in\NN$, then the function $f_{1,6k}\Delta^{-k/2}$
is the unique solution in $u\CC[[q]]$ of the differential equation
\begin{equation}\label{eqdiff}D^2(X)=\frac{k^2}{4} E_4X.\end{equation}
\end{Proposition}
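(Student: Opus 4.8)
The plan is to translate the statement about the extremal quasi-modular form $f_{1,6k}$ into the differential equation \eqref{eqdiff} via two reductions: first passing from $f_{1,6k}$ to a solution of the Kaneko--Koike hypergeometric-type differential equation, and then performing the substitution $X \mapsto X\Delta^{-k/2}$ to simplify that equation. Concretely, I would start from \cite[Theorem 2.1 part (1)]{KK2}, which asserts that the normalised extremal quasi-modular form of weight $w$ and depth one satisfies a specific second-order linear differential operator in $D$ with coefficients in the algebra of modular forms. Specializing $w = 6k$, the Kaneko--Koike equation should read as a relation of the form $D^2(g) + a\, E_2 D(g) + b\, (\text{weight-}4\text{ form})\, g = 0$ for explicit rational constants $a,b$ depending on $k$; I would write out this equation precisely and verify the normalisation conventions match those of the present paper (in particular that $D = q\,d/dq$ and the Eisenstein series are normalised as in \eqref{E2E4E6}).

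Next I would carry out the change of unknown. Setting $g = X \Delta^{k/2}$, so that $f_{1,6k} = g$ corresponds to $X = g\,\Delta^{-k/2}$, I compute $D(g)$ and $D^2(g)$ using the product rule together with the key identity $D(\Delta) = E_2 \Delta$, which gives $D(\Delta^{k/2}) = \tfrac{k}{2} E_2 \Delta^{k/2}$ and hence
\begin{equation*}
D^2(\Delta^{k/2}) = \left(\tfrac{k}{2} D(E_2) + \tfrac{k^2}{4} E_2^2\right)\Delta^{k/2}.
\end{equation*}
Substituting these expressions into the Kaneko--Koike equation for $g$, the terms involving the first derivative $D(X)$ should cancel precisely when the weighting by $\Delta^{k/2}$ is chosen to kill the $E_2 D(g)$ term — this is the standard trick of removing the first-order term of a second-order ODE by a suitable integrating factor, and here the factor is forced to be a power of $\Delta$ by weight considerations. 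What survives is a relation $D^2(X) = c\, E_4 X$, and the bookkeeping should produce the constant $c = k^2/4$ after inserting $D(E_2) = \tfrac{1}{12}(E_2^2 - E_4)$ from \eqref{ramanujan} to eliminate the remaining $E_2^2$ contributions.

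Finally I would address uniqueness. Equation \eqref{eqdiff} is a second-order linear ODE, so its solution space over the field of formal Laurent series is two-dimensional; the claim is that within $u\,\CC[[q]]$ there is a \emph{unique} solution up to the normalisation fixed by $f_{1,6k}$. I would argue this by an indicial/recursion analysis: writing a candidate solution as $u^\alpha \sum_{n\geq 0} c_n q^n$ and comparing leading orders in \eqref{eqdiff}, the two indicial roots differ in a way that permits only one solution lying in $u\,\CC[[q]]$ (the other having an incompatible leading exponent), and the recursion then determines all $c_n$ from $c_0$, pinned down by the normalisation $\nu(f_{1,6k}) = \delta_1(6k) - 1$ guaranteed by Theorem \ref{firsttheorem}.

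The main obstacle I expect is the first reduction: correctly extracting the precise form of the Kaneko--Koike differential equation from \cite[Theorem 2.1]{KK2} and reconciling their normalisation conventions with those used here, since a mismatch in the scaling of $E_2$, $E_4$, $E_6$ or in the definition of $D$ would propagate into a wrong constant and break the clean value $k^2/4$. The substitution step itself is routine once the source equation is fixed, and the uniqueness is a standard formal-power-series argument; the delicate point is pinning down the input equation so that the $E_2$-dependence cancels exactly.
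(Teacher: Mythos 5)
Your proposal follows essentially the same route as the paper's primary proof: invoke \cite[Theorem 2.1 part (1)]{KK2} for the depth-one extremal form with $w=6k$, then remove the $E_2D$-term by the substitution $X\mapsto X\Delta^{-k/2}$, which the paper packages as the operator identity $D^2-\frac{w}{6}E_2D+\frac{w(w-1)}{12}D(E_2)=\Delta^{k/2}\bigl(D^2-\frac{k^2}{4}E_4\bigr)\Delta^{-k/2}$ in the skew algebra $\CC[E_2,E_4,E_6,\Delta^{-\frac{1}{2}}][D]$. Your explicit indicial/recursion argument for uniqueness in $u\CC[[q]]$ is a harmless (and slightly more self-contained) supplement to the paper's appeal to the uniqueness statement of \cite{KK2}, so the two proofs are essentially identical; the paper additionally sketches an independent second proof via Rankin--Cohen brackets and Theorem \ref{firsttheorem}, which you do not need.
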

\begin{proof} First, we acknowledge that this is a direct consequence of the results of \cite{KK2}.
By \cite[Theorem 2.1 part (1)]{KK2} we have 
$$\left(D^2-\frac{w}{6}E_2D+\frac{w(w-1)}{12}D(E_2)\right)(f_{1,w})=0,\quad w\in6\NN,$$
and $f_{1,w}$ is the unique such solution in $\QQ[[q]]$.
Now, in the skew $\CC$-algebra $\CC[E_2,E_4,E_6,\Delta^{-\frac{1}{2}}][D]$ with $Dc=cD+D(c)$, $c\in \CC[E_2,E_4,E_6,\Delta^{-\frac{1}{2}}]$, we have
$$D^2-\frac{w}{6}E_2D+\frac{w(w-1)}{12}D(E_2)=\Delta^{\frac{k}{2}}\left(D^2-\frac{k^2}{4} E_4\right)\Delta^{-\frac{k}{2}}$$ and this concludes the proof.

We can also proceed independently from \cite[Theorem 2.1]{KK2} by using Theorem \ref{firsttheorem} and some basic properties of Rankin-Cohen brackets in the following way.
Note that $\delta_1(6k)=\delta_1(6k+4)=k+1$ for $k\in\NN$. Hence, $\nu_{\max}(1,6k)=\nu_{\max}(1,6k+4)=k$, $k\in\NN$.
Now observe that 
$$g_k:=\left(\Delta^{\frac{k}{2}}\left(D^2-\frac{k^2}{4}E_4\right)\Delta^{-\frac{k}{2}}\right)(f_{1,6k})=\theta^{(1)}_{6k-1}(f_{1,6k})\in\widetilde{M}^{\leq 1}_{6k+4}$$ in the notation of \cite[p. 467]{KK2}, thanks to \cite[Proposition 3.3]{KK2} (the proof of which uses the notion of $\mathfrak{sl}_2$-triple). Hence looking at $q$-expansions and using that $f_{1,6k}=q^k+o(q^{k+1})$ we see that $\nu(g_k)>\nu_{\max}(1,6k+4)$
and $g_k=0$ for all $k$.\end{proof}

A normalised extremal quasi-modular form (of weight $w$ and depth $\leq l$) has its $q$-expansion which is defined over $\QQ$; in other words, the coefficients of its $q$-expansion are rational numbers. By the fact that 
the $q$-expansions of $E_2,E_4,E_6$ are rational integers, it is also clear that the denominators 
of these rational numbers are bounded in absolute value depending on $l$ and $w$. It is then natural to ask for various properties of these rational coefficients such as upper bounds for the primes dividing these denominators in the style of Clausen-von Staudt Theorem.

We recall \cite[Conjecture 2 p. 469]{KK2}:
\begin{Conjecture}[Kaneko and Koike]\label{Conjecture-KK}
If $l\in\{1,2,3,4\}$, then $f_{l,w}$ belongs to $\ZZ[\frac{1}{p}:p<w][[q]]$. Furthermore, except for $f_{1,2}=E_2$, the coefficients of the Fourier series of $f_{l,w}$ are all positive. 
\label{conj1}
\end{Conjecture}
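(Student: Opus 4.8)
The plan is to separate the conjecture into its two independent halves---the denominator bound and the positivity---and to route the arithmetic through the linear differential equations satisfied by the forms $f_{l,w}$. I would begin with depth one, the only case in which the appendix records progress, exploiting the second-order Kaneko--Koike equation recalled in the proof of Proposition \ref{prop12}, and for weight $w=6k$ its clean shape $D^2(g)=\tfrac{k^2}{4}E_4\,g$ with $g:=f_{1,6k}\Delta^{-k/2}\in u\CC[[q]]$. Writing $g=\sum_{m\geq0}g_m\,u^{k+2m}$ with $g_0=1$, and using $D=\tfrac12\,u\,\tfrac{d}{du}$ (so that $D(u^s)=\tfrac{s}{2}u^s$), the equation \eqref{eqdiff} collapses to the fully explicit recurrence
$$4m(m+k)\,g_m=k^2\sum_{j=1}^{m}c_j\,g_{m-j},\qquad c_j:=240\,\sigma_3(j)\in\ZZ,\quad g_0=1.$$
Because $240=2^4\cdot3\cdot5$ and the primes $2,3,5$ lie below $w=6k$ for every $k\geq1$, these primes never threaten the conjectural bound; the residue classes $w\equiv2,4\pmod6$ would then be reached from $w\equiv0$ through the contiguity and Rankin--Cohen relations alluded to before Proposition \ref{prop12}, which pass between neighbouring weights without enlarging the set of forbidden primes.

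The genuine difficulty is already visible in this recurrence: the leading factor $m(m+k)$ is \emph{not} bounded by $w$, so any prime $p\geq w$ divides $m(m+k)$ for infinitely many $m$, and a superficial reading of the recurrence would deposit such a $p$ in the denominator of $g_m$. The conjecture thus asserts a systematic cancellation---whenever $p\geq w$ divides $m(m+k)$, the right-hand sum $k^2\sum_{j\geq1}c_j g_{m-j}$ is itself divisible by $p$; one checks by hand that this is exactly what happens at the first critical step $k=1$, $m=6$, $p=7$. Proving it uniformly in $m$ is the crux, and I would attempt it $p$-adically: either by identifying $g$, inside $\QQ_p[[u]]$, with a $p$-adically bounded (Dwork-type) solution of the hypergeometric operator underlying \eqref{eqdiff}, so that no pole can form at the forbidden residues, or by extracting Kummer-type congruences among the $g_m$ that force the compensating divisibility. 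This is precisely where I expect the argument to stall in general, consistent with the appendix settling only the single instance $w=14$, $l=1$.

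For depths $l\in\{2,3,4\}$ I would substitute the order-$(l+1)$ linear differential equations that Kaneko and Koike attach to the extremal forms---of which $f_{l,w}$ is again the unique normalised solution---and run the same programme, now with an $(l+1)$-term recurrence whose indicial factor must be shown to swallow every prime $\geq w$. The positivity assertion is of an entirely different, analytic character and does not follow from integrality (note that $\Delta^{k/2}$ already has coefficients of mixed sign, so positivity of $f_{1,6k}$ cannot be read off from that of $g$): I would instead try to show that beyond its initial segment of length $\delta_l(w)$ the defining recurrence becomes positivity-preserving, so that the tail of positive coefficients follows by induction once enough leading ones are known, or, in favourable weights, to exhibit the solution as a hypergeometric series whose coefficients are manifestly non-negative. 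Securing either of these uniformly over all admissible pairs $(l,w)$, together with the $p$-adic cancellation above, is the main obstacle and the reason the statement remains a conjecture rather than a theorem.
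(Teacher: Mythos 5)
The statement you were asked to prove is labelled a \emph{Conjecture} in the paper --- it is Kaneko and Koike's open conjecture, and the paper does not prove it. Your proposal, as you yourself concede, is not a proof either: it is a programme that stalls exactly at what you call the crux (the systematic cancellation of primes $\geq w$ coming from the indicial factor) and at positivity. So your honest conclusion is consistent with the paper. The meaningful comparison is therefore with the partial results the paper does establish: Theorem \ref{secondtheorem} (the denominator bound $f_{1,6k}\in\ZZ[\frac{1}{p}:p<6k][[q]]$ for all $k$, i.e.\ depth one, weight divisible by six) and the appendix's Theorem \ref{meq1} ($f_{1,14}\in\ZZ[[q]]$, proved by an entirely separate lattice-theoretic argument on shells of the Leech lattice).

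Relative to Theorem \ref{secondtheorem}, your proposal misses the paper's key idea. You correctly derive the coefficient recurrence (it is exactly (\ref{equations-inductive}) with $x=k/2$), correctly observe that its leading factor $m(m+k)$ threatens to place primes $\geq w$ in denominators, and then propose to prove the needed cancellation $p$-adically (Dwork-type boundedness or Kummer congruences) --- the step you admit you cannot do. The paper sidesteps that recurrence altogether and recurses in the \emph{weight} direction instead: the `Lax type' operator identities (Lemmas \ref{lemma-F} and \ref{lemma-G}) yield the contiguity relation $G(\varphi_2)=0$ (Lemma \ref{propo}), that is,
$$\sigma^2(\varphi_2)=\mu(k)\left(1-\frac{E_6}{\Delta^{1/2}}\,\sigma\right)(\varphi_2),\qquad \mu(k)=\frac{(1+k)(2+k)}{12(7+6k)(11+6k)},$$
which expresses the normalised solution at parameter $k+2$ through the ones at $k+1$ and $k$ together with the $\ZZ$-integral series $E_6\Delta^{-1/2}$. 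Specialising at $k=0$ after applying $\sigma^{i-2}$ gives $c_{i,j}=\frac{i(i-1)}{12(6i-1)(6i-5)}\,r_j$, so the only new denominators introduced at weight $6i$ divide $12(6i-1)(6i-5)$, all of whose prime factors are $<6i=w$; induction on $i$ finishes the argument. This is precisely the ``systematic cancellation'' you postulated, but it is realised by an induction across the one-parameter family of differential equations, not $p$-adically inside a single one --- the contiguity structure you mentioned in passing is the engine, not merely a device for moving between residue classes of $w$ (and, incidentally, the paper makes no claim for $w\not\equiv0\pmod 6$, so your assertion that those cases ``would then be reached'' is unsupported). Your remarks on positivity --- that it does not follow from integrality, and that the recurrence makes the coefficients of $f_{1,6k}\Delta^{-k/2}$ non-negative while saying nothing about $f_{1,6k}$ itself --- agree with the paper's closing Remark; there, as for the full conjecture, neither you nor the paper has a proof.
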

This supports 
Kaneko and Koike's prediction that these coefficients could be the values of some ``counting function of
geometric nature".  
In the direction of this conjecture, we have:
\begin{Theorem}\label{secondtheorem}
For all $k\geq 0$
 we have $f_{1,6k}\in \ZZ[\frac{1}{p}:p<6k][[q]]$.
\end{Theorem}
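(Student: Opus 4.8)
The plan centers on Proposition~\ref{prop12}, which reduces the study of $f_{1,6k}$ to the hypergeometric-type differential equation $D^2(X)=\frac{k^2}{4}E_4 X$. Setting $h_k:=f_{1,6k}\Delta^{-k/2}\in u\CC[[q]]$, we know $h_k$ is the unique solution in $u\CC[[q]]$ of this equation. First I would write $h_k=\sum_{n\geq 0}c_n u^{2n+k}$ (so the leading term is $u^k$, reflecting $\nu(f_{1,6k})=k$) and translate the differential equation into a recursion for the coefficients $c_n$. Since $D=q\frac{d}{dq}=\frac{u}{2}\frac{d}{du}$, the operator $D^2$ acts on $u^m$ by multiplication by $(m/2)^2$, and $E_4\in\ZZ[[q]]$ has constant term $1$; thus the recursion expresses $((n+\tfrac{k}{2})^2-\tfrac{k^2}{4})c_n=\big(\tfrac{1}{4}(2n+k)^2-\tfrac{k^2}{4}\big)c_n$ in terms of earlier $c_m$ multiplied by the integral Fourier coefficients of $E_4$. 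The crucial point is that the coefficient of $c_n$ on the left is $n(n+k)$, so the recursion reads $n(n+k)c_n=(\text{integer combination of }c_0,\dots,c_{n-1})$.

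From this recursion, an induction on $n$ shows that the primes appearing in the denominator of $c_n$ are controlled by the primes dividing the integers $n(n+k)$ for $n\geq 1$. Normalising $c_0=1$, each $c_n$ is obtained by dividing an element of $\ZZ[c_0,\dots,c_{n-1}]$ by $n(n+k)$. The second step is therefore to bound, for $n\geq 1$, the primes $p$ that can divide $n(n+k)$ and accumulate into the denominator of $c_n$. A prime $p$ divides $n(n+k)$ only if $p\mid n$ or $p\mid(n+k)$; I would argue that the relevant primes are bounded by $6k$ by tracking how the valuation $v_p(c_n)$ can decrease as $n$ grows, showing that once $p\geq 6k$ (equivalently once $p$ is large compared to the weight) the denominators cannot acquire powers of $p$, because any such contribution is cancelled by the integrality of the numerator coming from the modular form $E_4$ and the structure of $\Delta^{k/2}$.

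The last step is to transfer integrality from $h_k$ back to $f_{1,6k}=h_k\Delta^{k/2}$. Here I would use that $\Delta^{1/2}\in u\ZZ[[q]]$ with leading coefficient $1$ (it is the normalised square root of $\Delta=q\prod(1-q^n)^{24}$, which visibly has integer coefficients and is a perfect square in $u\ZZ[[q]]$ since $24$ is even), hence $\Delta^{k/2}\in u^k\ZZ[[q]]$. Therefore the product $h_k\Delta^{k/2}$ lies in $\ZZ[\frac1p:p<6k][[q]]$ provided $h_k$ does, which is what the recursion analysis delivers.

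The main obstacle I expect is the sharp prime bound $p<6k$ rather than a cruder bound like $p\leq 2k$ or $p\mid n(n+k)$ for some $n\leq\delta_1(6k)$. The naive recursion only shows denominators divide $\prod_{n=1}^{N}n(n+k)$ for partial sums up to degree $N$, which introduces arbitrarily large primes as $N\to\infty$; the difficulty is proving that large primes $p\in[6k,\,?)$ which divide some $n+k$ do \emph{not} actually survive into the denominator. This requires a more careful $p$-adic analysis of the recursion, likely exploiting that for $p>6k$ the corresponding coefficient of $E_4$ (a $p$-th power Frobenius-type congruence, or the Kummer-type congruences for $\sigma_3$) forces the numerator to be divisible by $p$ whenever the denominator factor $n(n+k)$ is, so the apparent pole is spurious. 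Making this cancellation rigorous, rather than merely the weaker divisibility bound, is the heart of the proof.
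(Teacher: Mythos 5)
Your setup is sound and matches the paper's starting point: Proposition \ref{prop12}, your coefficient recursion (which is exactly (\ref{equations-inductive}) with $x=k/2$, i.e. $n(n+k)c_n=60k^2\sum_{i=1}^n\sigma_3(i)c_{n-i}$), and the transfer back to $f_{1,6k}=h_k\Delta^{k/2}$ via $\Delta^{1/2}\in u\ZZ[[q]]$ are all correct. But the proof has a genuine gap, and you have located it yourself in your final paragraph: induction on the coefficient index $n$ only shows that the denominator of $c_n$ divides a product of the integers $m(m+k)$, $m\leq n$, which contains arbitrarily large primes as $n\to\infty$, and your proposed rescue --- that for $p\geq 6k$ some Kummer-type congruence for $\sigma_3$ forces the numerator to be divisible by $p$ whenever $p\mid n(n+k)$ --- is an unproven claim, not an argument. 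Nothing in the coefficients of $E_4$ alone makes this cancellation evident, and no mechanism is given that would establish it. So the heart of the theorem, the sharp bound $p<6k$, is missing.

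The paper closes this gap by inducting on the weight rather than on the Fourier coefficient. Treating $k$ as an indeterminate with the shift automorphism $\sigma\colon k\mapsto k+1$, it proves two identities of ``Lax type'' (Lemmas \ref{lemma-F} and \ref{lemma-G}), which combine with the normalisation computation of Lemma \ref{lemma-mu} to yield the contiguity relation of Lemma \ref{propo}:
\begin{equation*}
\sigma^2(\varphi_2)=\mu(k)\left(\varphi_2-\frac{E_6}{\Delta^{1/2}}\,\sigma(\varphi_2)\right),
\qquad
\mu(k)=\frac{(1+k)(2+k)}{12(7+6k)(11+6k)},
\end{equation*}
as in (\ref{mu}). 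Specialising at $k=0$, the solution corresponding to weight $6i$ is expressed in terms of those for weights $6(i-2)$ and $6(i-1)$, multiplied by the series $E_6\Delta^{-1/2}\in u^{-1}\ZZ[[q]]$ (which is independent of $k$) and by the rational constant $\frac{i(i-1)}{12(6i-1)(6i-5)}$. Hence the only new primes entering denominators at step $i$ are divisors of $12(6i-1)(6i-5)$, all of which are $<6i$; the base cases $i=0,1$ are checked by hand. This weight-lowering recursion, in steps of two, is exactly the mechanism your sketch lacks: the bound $p<6k$ comes from the explicit denominator of the contiguity factor $\mu$, not from any $p$-adic cancellation inside the recursion in $n$. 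If you want to complete your approach, you should look for such a contiguity (weight-shift) relation rather than trying to control the coefficient recursion directly.
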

This result has been also independently noticed by Kaneko, along with the analogue statement for the case $l=2$ of the conjecture, as an application of the techniques introduced in \cite{KK1,KK2}. We propose here to revisit these techniques, including the arguments of \cite{KK2}. The novel observation is the use of certain identities of 'Lax type' (see Lemmas \ref{lemma-F} and \ref{lemma-G} below). We hope that these techniques can contribute to fully solve the conjecture.

We suppose that $k$ is now an indeterminate and we set $B:=\QQ(k)[E_2,E_4,E_6,\Delta^{-\frac{1}{2}}]$.
Note that $B$ embeds in the valued field $\QQ(k)((u))$ where $u^2=q$. 
Also, $B$ and $\QQ(k)((u))$ are endowed with the $\QQ((u))$-linear automorphism $\sigma$
defined by $\sigma(k)=k+1$ and also with the $\QQ(k)$-linear derivation induced by the system (\ref{ramanujan}) and by $D(u)=\frac{u}{2}$. These two $\QQ$-vector space endomorphisms satisfy the commutation rules:
$$D\sigma=\sigma D,\quad Dc=cD+D(c),\quad \sigma c=\sigma(c)\sigma,\quad c\in\QQ(k)((u))$$ and we can consider the skew polynomial Ore algebras
$$B[D,\sigma]\subset\QQ(k)((u))[D,\sigma]$$ which can be identified with sub-algebras of the $\QQ$-linear endomorphisms of $B\subset\QQ(k)((u))$. Let $\Psi$ be in $B[D,\sigma]$. Then, the map
$$\partial_\Psi:B[D,\sigma]\rightarrow B[D,\sigma],\quad \partial_\Psi(x)=[x,\Psi]=x\Psi-\Psi x$$
is a derivation. Also, $B[D,\sigma]$ is equipped with the $\QQ[E_2,E_4,E_6,\Delta^{-\frac{1}{2}}]$-linear automorphism 
$$\Psi\mapsto \Psi^{(1)}:=\sigma \Psi \sigma^{-1}.$$ In other words, $\Psi^{(1)}\in B[D,\sigma]$ is the unique element 
such that $\Psi^{(1)}\sigma=\sigma \Psi$ and clearly, we can also define $\Psi^{(i)}$ for all $i\in\ZZ$. In addition, there is an obvious commutation rule between $X\mapsto X^{(1)}$ and $\delta_{\bullet}$. Similar properties as the above hold in the algebra $\QQ(k)((u))[D,\sigma]$.
We set:
$$E:=D^2-\frac{k^2}{4}E_4\in B[D,\sigma].$$
We are interested in the $\QQ(k)$-vector space
$$H:=\{\Psi\in B[D,\sigma]:\partial_E(\Psi)\in B[D,\sigma]E\}$$
which is the intersection between the left ideal generated by $E$ and the image of $\partial_E$ in $B[D,\sigma]$.
Clearly, $\QQ(k)\subset H$. We set:
$$F:=\Delta^{-\frac{1}{2}}(E_2E_4+(5+6k)E_6+12E_2D)\sigma\in B[D,\sigma].$$
\begin{Lemma}\label{lemma-F}
We have $\QQ(k)[F]\subset H$.
\end{Lemma}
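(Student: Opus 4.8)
The plan is to establish two facts: that $H$ is a $\QQ(k)$-subalgebra of $B[D,\sigma]$ containing $\QQ(k)$, and that $F\in H$. Granting these, $\QQ(k)[F]\subseteq H$ follows at once, since every power $F^n$ lies in $H$ by multiplicativity and every $\QQ(k)$-linear combination of the $F^n$ lies in $H$ because $H$ is a $\QQ(k)$-subspace.

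First I would reformulate $H$ as an idealizer. As $(E):=B[D,\sigma]E$ is a left ideal, $\Psi E\in(E)$ holds for every $\Psi$, so the defining condition $\partial_E(\Psi)=\Psi E-E\Psi\in(E)$ is equivalent to $E\Psi\in(E)$; thus $H=\{\Psi:E\Psi\in(E)\}$ is exactly the idealizer of $(E)$. This is a subring by the standard argument: if $\Psi_1,\Psi_2\in H$ then, $\partial_E$ being a derivation,
\[
\partial_E(\Psi_1\Psi_2)=\partial_E(\Psi_1)\,\Psi_2+\Psi_1\,\partial_E(\Psi_2),
\]
where $\Psi_1\,\partial_E(\Psi_2)\in\Psi_1(E)\subseteq(E)$, and, writing $\partial_E(\Psi_1)=SE$, the term $\partial_E(\Psi_1)\,\Psi_2=S\,(E\Psi_2)$ lies in $(E)$ precisely because $\Psi_2\in H$. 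Moreover $E$ involves no $\sigma$ and $D$ annihilates $\QQ(k)$, so $Ec=cE$ for $c\in\QQ(k)$; hence $\partial_E$ is $\QQ(k)$-linear and $H$ is a $\QQ(k)$-subspace containing $\QQ(k)$. Everything is thereby reduced to the single assertion $F\in H$.

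To prove $F\in H$ I would write $F=A\sigma$ with $A=a_0+a_1D\in B[D]$ the first-order operator whose coefficients are read off from $F$. Using $\sigma E=E^{(1)}\sigma$, where $E^{(1)}:=\sigma E\sigma^{-1}=D^2-\tfrac{(k+1)^2}{4}E_4$, one finds $EF=(EA)\,\sigma$; and since the $\sigma^1$-component of any element of $(E)$ is a left $B[D]$-multiple of $E^{(1)}$, the condition $EF\in(E)$ is equivalent to the purely differential statement that $EA$ be a left multiple of $E^{(1)}$, i.e. $EA=QE^{(1)}$ for some $Q\in B[D]$. As $E^{(1)}$ is monic of order $2$ and $EA$ has order $3$, non-commutative left division forces $Q=a_1D+\bigl(a_0+2D(a_1)\bigr)$, and the divisibility amounts to the vanishing of the order-$\le1$ remainder $EA-QE^{(1)}$. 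Matching the coefficients of $D^{1}$ and $D^{0}$ produces two identities in $B$, the first of which is
\[
2D(a_0)+D^2(a_1)+\tfrac{2k+1}{4}E_4\,a_1=0 .
\]

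Verifying these two identities is the main obstacle, and it is a direct computation with Ramanujan's system (\ref{ramanujan}) together with $D(\Delta^{-1/2})=-\tfrac12E_2\Delta^{-1/2}$ (which follows from $D(\Delta)=E_2\Delta$); the weight grading organises the computation and supplies a ready consistency check at each step. Once the remainder is seen to vanish one has $EA=QE^{(1)}$, hence $AE^{(1)}-EA=-2D(a_1)E^{(1)}$, and therefore the clean Lax-type identity
\[
\partial_E(F)=[F,E]=-2D(a_1)\,\sigma\,E ,
\]
which exhibits $\partial_E(F)$ as an explicit left multiple of $E$ and so proves $F\in H$, completing the argument.
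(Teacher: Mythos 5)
Your preparatory steps are correct, and they make explicit what the paper leaves implicit: the idealizer argument showing that $H$ is a $\QQ(k)$-subalgebra of $B[D,\sigma]$ (so that only $F\in H$ needs proof) is sound, and so is the reduction of $F\in H$ to a right-division problem in $B[D]$. Writing $F=A\sigma$ with $A=a_0+a_1D$, $a_0=\Delta^{-1/2}(E_2E_4+(5+6k)E_6)$, $a_1=12\Delta^{-1/2}E_2$, the condition is indeed that $EA=QE^{(1)}$ with $Q=a_1D+\bigl(a_0+2D(a_1)\bigr)$, equivalently that your two remainder identities hold, and divisibility would indeed force $\partial_E(F)=-2D(a_1)\,\sigma E$. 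This is the same route as the paper's own proof, whose entire content is the explicit identity (\ref{lax1}).

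The gap is that the one step carrying all the mathematical weight --- verifying the two identities in $B$ --- is never carried out; you call it ``the main obstacle'' and then simply assert its conclusion. The omission is not benign, because the verification fails for the data as stated. Since $D(\Delta^{-1/2}X)=\Delta^{-1/2}\bigl(D(X)-\tfrac12E_2X\bigr)$ takes a quasi-modular form $X$ of weight $m$ to one of weight $m+2$, in your first identity the term $2D(a_0)$ is $\Delta^{-1/2}$ times a nonzero form of weight $8$, whereas $D^2(a_1)+\tfrac{2k+1}{4}E_4a_1$ is $\Delta^{-1/2}$ times a form of weight $6$; by the grading of $\QQ(k)[E_2,E_4,E_6]$ the identity could only hold if $D(a_0)=0$, which is false, and moreover its weight-$6$ part contains the term $\tfrac53E_2^3$, which nothing cancels. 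Concordantly, the Lax coefficient your argument forces is $-2D(a_1)=\Delta^{-1/2}(10E_2^2+2E_4)$ --- since (\ref{ramanujan}) and $D(\Delta)=E_2\Delta$ give $D(a_1)=-\Delta^{-1/2}(5E_2^2+E_4)$ --- and this does not agree with the coefficient $4\Delta^{-1/2}(E_2E_4+2E_6)$ in (\ref{lax1}): the two identities are incompatible, and in fact neither holds for $F$ as written (one can also check on $q$-expansions that $F(\varphi_2)$ fails to be proportional to $\varphi_2$ already at order $q$). So the computation you defer is exactly where the difficulty sits: carried out literally, it refutes rather than proves the required divisibility, which points to a typo in the stated definition of $F$ or in (\ref{lax1}) that must be located and repaired before the lemma can be established. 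A proof that leaves this step as ``a direct computation'' has therefore not established the statement.
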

\begin{proof} It suffices to show that $F\in H$. This follows from an elementary computation which uses (\ref{ramanujan}).
More explicitly, we have the formula (our first identity of 'Lax type')
\begin{equation}\label{lax1}
\delta_E(F)=4\Delta^{-\frac{1}{2}}(E_2E_4+2E_6)\sigma E.
\end{equation}
\end{proof}
Let $\mu(k)$ be an element of $\QQ(k)$.
We set
$$G_\mu:=\sigma^2-\mu(k)\left(1-\frac{R}{\Delta^{\frac{1}{2}}}\sigma\right)\in B[\sigma].$$
\begin{Lemma}\label{lemma-G}
The following identity holds:
\begin{equation}\label{lax2}
E^{(2)}G_\mu-G_\mu E=\mu(k)E_4\left(\frac{F}{12}-(k+1)\right).
\end{equation}
\end{Lemma}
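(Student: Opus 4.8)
The plan is to treat (\ref{lax2}) as an explicit identity in the non-commutative Ore algebra $B[D,\sigma]$ and reduce it to a finite computation governed by Ramanujan's system (\ref{ramanujan}). The guiding picture is that $E=D^2-\frac{k^2}{4}E_4$ is a Schr\"odinger-type operator whose potential depends on $k$, that $\sigma$ shifts $k\mapsto k+1$, and that $G_\mu$ plays the role of a two-step ladder (Darboux) operator intertwining $E$ with its shift $E^{(2)}$; the right-hand side of (\ref{lax2}) then measures the failure of exact intertwining. Concretely, since $\sigma$ is $\QQ((u))$-linear and commutes with $D$, conjugation by $\sigma^{i}$ acts on $E$ only through the substitution $k\mapsto k+i$, so $E^{(i)}=D^2-\frac{(k+i)^2}{4}E_4$ and, for any $\Phi\in B[D,\sigma]$, one has the rule $\sigma\Phi=\Phi^{(1)}\sigma$. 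I would record at the outset the two scalar derivatives I will need, both consequences of $D(\Delta)=E_2\Delta$: namely $D(\Delta^{\frac12})=\frac12 E_2\Delta^{\frac12}$ and $D(\Delta^{-\frac12})=-\frac12 E_2\Delta^{-\frac12}$.

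First I would expand $G_\mu=\sigma^2-\mu(k)+\mu(k)\frac{R}{\Delta^{1/2}}\sigma$, distribute in $E^{(2)}G_\mu-G_\mu E$, and collect terms by their degree in $\sigma$. The top-degree contributions cancel exactly: $E^{(2)}\sigma^2=\sigma^2 E$ by the intertwining rule, so the leading $\sigma^2$ parts of $E^{(2)}G_\mu$ and of $G_\mu E$ coincide. What remains is supported in $\sigma$-degrees $0$ and $1$. The degree-$0$ part is $-\mu(k)\bigl(E^{(2)}-E\bigr)$, and since $E^{(2)}-E=-(k+1)E_4$ this contributes a pure multiple of $E_4$.

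For the degree-$1$ part I would set $c:=R\Delta^{-1/2}$ and use $\sigma E=E^{(1)}\sigma$ to bring it into the form $\mu(k)\bigl(E^{(2)}c-cE^{(1)}\bigr)\sigma$. Expanding the action of $D^2$ on multiplication by $c$ through the Leibniz rule $D^2c=cD^2+2D(c)D+D^2(c)$, the two copies of $cD^2$ cancel and one is left with a first-order operator $\mu(k)\bigl(2D(c)\,D+D^2(c)-\tfrac{2k+3}{4}E_4c\bigr)\sigma$. The final step is to match this, together with the degree-$0$ term, against the right-hand side: I would expand $\frac{F}{12}$ from the definition of $F$ into a $D\sigma$-part and a $\sigma$-part, then compare coefficients of $D\sigma$, of $\sigma$, and of the identity. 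Matching the $D\sigma$-coefficients pins down $D(c)$, hence the defining differential relation satisfied by $R$ (after clearing $\Delta^{-1/2}$ via the derivative formulas above); matching the remaining coefficients reduces to a single polynomial identity among $E_2,E_4,E_6$, which I would verify directly from (\ref{ramanujan}).

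I expect the only real difficulty to be organisational rather than conceptual: keeping the $D$'s and $\sigma$'s in their correct non-commutative order and correctly propagating the $k$-shifts through every conjugation, so that no cross term is dropped in the Leibniz expansions. This is precisely the kind of bookkeeping already carried out for the companion identity (\ref{lax1}) in Lemma \ref{lemma-F}, and the present computation can reuse the same expansion of $F$; once the coefficient-matching is set up, the verification is a routine, if somewhat lengthy, application of Ramanujan's relations.
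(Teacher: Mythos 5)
Your overall strategy is certainly the one the paper's one-line proof (``an elementary computation'') has in mind: expand $E^{(2)}G_\mu-G_\mu E$ in $B[D,\sigma]$, grade by powers of $\sigma$, kill the top part via $E^{(2)}\sigma^2=\sigma^2E$, reduce the $\sigma$-degree-one part with the Leibniz rule, and compare coefficients. Moreover, every intermediate formula you state is correct: $E^{(i)}=D^2-\frac{(k+i)^2}{4}E_4$, the degree-zero part $-\mu(k)(E^{(2)}-E)$ with $E^{(2)}-E=-(k+1)E_4$, and the degree-one part $\mu(k)\left(2D(c)D+D^2(c)-\frac{2k+3}{4}E_4c\right)\sigma$ with $c=R\Delta^{-1/2}$.

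The gap lies precisely in the step you declare routine and never carry out: the coefficient matching against the right-hand side of (\ref{lax2}) cannot succeed with the paper's printed formulas, and your own computations already reveal this. Your degree-zero term is $-\mu(k)(E^{(2)}-E)=+\mu(k)(k+1)E_4$, while the degree-zero part of the printed right-hand side is $-\mu(k)(k+1)E_4$ (the term $\mu(k) E_4F/12$ is purely of $\sigma$-degree one); this sign clash is independent of $R$ and of $\mu$, so no choice of data repairs it, and it would have surfaced had you finished the one-line comparison you set up. Likewise, matching the $D\sigma$-coefficients forces $2D(c)=E_2E_4\Delta^{-1/2}$, whereas for $R=E_6$ (the value of $R$ that the proof of Lemma \ref{lemma-mu} shows is intended) Ramanujan's system gives $D(E_6\Delta^{-1/2})=-\frac{1}{2}E_4^2\Delta^{-1/2}$; no choice of $R$ produces an $E_2E_4$ term here, so this match fails as well. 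What is true, and what your method does prove once pushed through, is the identity after two corrections: the $D$-term in the definition of $F$ must read $12E_4D$ rather than $12E_2D$, and the right-hand side of (\ref{lax2}) must carry an overall minus sign (equivalently, the left-hand side should be $G_\mu E-E^{(2)}G_\mu$). Indeed, writing $F^{*}:=\Delta^{-1/2}\left(E_2E_4+(5+6k)E_6+12E_4D\right)\sigma$ and using $D(c)=-\frac{1}{2}E_4^2\Delta^{-1/2}$, $D^2(c)=\left(-\frac{1}{12}E_2E_4^2+\frac{1}{3}E_4E_6\right)\Delta^{-1/2}$, your decomposition gives
\begin{equation*}
E^{(2)}G_\mu-G_\mu E\;=\;\mu(k)(k+1)E_4-\mu(k)\,\frac{E_4}{12}\,F^{*}\;=\;-\mu(k)E_4\left(\frac{F^{*}}{12}-(k+1)\right),
\end{equation*}
and this corrected statement is the one Lemma \ref{propo} actually needs: $F^{*}$ has eigenvalue $12(k+1)$ on $\varphi_2$, so the right-hand side kills $\varphi_2$ and $E^{(2)}(G(\varphi_2))=0$ follows. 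A valid write-up must therefore detect and fix these misprints; executed literally, your plan terminates in a contradiction at its first comparison. (A minor further point: the paper asserts this computation is independent of (\ref{ramanujan}), but identifying the degree-one bracket with a multiple of $F$ unavoidably evaluates $D(E_6\Delta^{-1/2})$ and $D^2(E_6\Delta^{-1/2})$, so your appeal to Ramanujan's relations is in fact necessary.)
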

\begin{proof} This also follows from an elementary computation, independent this time from (\ref{ramanujan}). Note that the identity is independent of the choice of $\mu(k)$.
\end{proof}
The identity (\ref{lax2}) is our second identity of 'Lax type'.
We now proceed to construct formal solutions of (\ref{eqdiff}). Let $Y$ be a formal solution of 
the equation $D(Y)=\frac{k}{2}Y$ (in terms of analytic functions of two variables, we would have $Y=e^{kz\pi i}$).
Since $\sigma$ and $D$ commute in $\QQ(k)((u))[D,\sigma]$, we can give the field
$$L=\QQ(k)((u))(Y)$$ a structure of $\QQ(k)((u))[D,\sigma]$-module by setting $\sigma(Y)=Yu$. 
If $f\in L$, and $\Psi\in\QQ(k)((u))[D,\sigma]$ we denote by $\Psi(f)$ the action of $\Psi$ on $f$ for this module structure (evaluation). It is easy to see that
$Y$ and $Y^{-1}$ are linearly independent over $\QQ(k)((u))$.

Let $(c_n(x))_{n\geq 0}$ be the sequence of $\QQ(x)$ with $c_0=1$, uniquely defined inductively as follows:
\begin{equation}\label{equations-inductive}
c_n(x)=\frac{240x^2}{n(n+2x)}\sum_{i=1}^n\sigma_3(i)c_{n-i}(x).
\end{equation}
This is just the recursive rule induced by (\ref{eqdiff}) and we have 
$$\operatorname{Ker}(E)=\QQ(k)\varphi_1\oplus\QQ(k)\varphi_2\subset L$$
where
$$\varphi_1=Y^{-1}\sum_{n\geq 0}c_n(-k/2)q^n,\quad \varphi_2=Y\sum_{n\geq 0}c_n(k/2)q^n.$$
In particular, we have 
\begin{equation}\label{varphi2}
\varphi_2=Y\left(1+\frac{60k^2}{k+1}q+o(q)\right)\in Y\QQ(k)((q)).
\end{equation}
It is also easy to see that $\varphi_1$ and $\varphi_2$ are linearly independent over $\QQ(k)((u))$.

\begin{Lemma}\label{lemma-mu} We have
$$G_\mu(\varphi)=Yo(1)$$ in $Y\QQ(k)((q))$ if and only if 
\begin{equation}\label{mu}\mu(k)=\frac{(1+k)(2+k)}{12(7+6k)(11+6k)}.\end{equation}
\end{Lemma}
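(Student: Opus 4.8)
The plan is to prove the equivalence by a direct expansion of $G_\mu(\varphi)$ at the cusp, taking $\varphi=\varphi_2$. Writing $G_\mu=\sigma^2-\mu(k)+\mu(k)\frac{R}{\Delta^{1/2}}\sigma$, I would apply each of the three summands to $\varphi_2$ using only $\sigma(Y)=Yu$, $u^2=q$ and $D(u)=\tfrac u2$. This produces $\sigma^2(\varphi_2)=Yq\sum_{n\ge0}c_n(\tfrac{k+2}2)q^n$ and $\sigma(\varphi_2)=Yu\sum_{n\ge0}c_n(\tfrac{k+1}2)q^n$, while $\Delta^{1/2}=u\prod_{n\ge1}(1-q^n)^{12}$ exhibits $\frac{R}{\Delta^{1/2}}$ as $u^{-1}$ times a power series in $q$. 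The factor $u^{-1}$ cancels the $u$ carried by $\sigma(\varphi_2)$, so every summand, and hence $G_\mu(\varphi_2)$ itself, lies in $Y\QQ(k)((q))$; recording this membership is the first step.

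Next I would compute the two lowest Fourier coefficients of $G_\mu(\varphi_2)$. Since $\sigma^2(\varphi_2)$ begins at order $q$, the coefficient of $Yq^0$ comes only from $-\mu(k)\varphi_2$ and $\mu(k)\frac{R}{\Delta^{1/2}}\sigma(\varphi_2)$, and equals $\mu(k)\bigl(R(0)-1\bigr)$; this vanishes for every $\mu$, because $R=E_6$ has constant term $1$. Because the constant term vanishes identically, the defining condition reduces to the vanishing of the next coefficient, that of $Yq$. Feeding in $c_0=1$, $c_1(\tfrac k2)=\tfrac{60k^2}{k+1}$, $c_1(\tfrac{k+1}2)=\tfrac{60(k+1)^2}{k+2}$ from (\ref{equations-inductive}) together with the expansion $R\,\prod_{n\ge1}(1-q^n)^{-12}=1-492q+\cdots$, I would obtain that coefficient as
$$1+\mu(k)\Bigl(c_1(\tfrac{k+1}2)-c_1(\tfrac k2)-492\Bigr)=1-\mu(k)\,\frac{12(6k+7)(6k+11)}{(k+1)(k+2)},$$
the simplification using $c_1(\tfrac{k+1}2)-c_1(\tfrac k2)=\tfrac{60(k^2+3k+1)}{(k+1)(k+2)}$ and the factorisation $36k^2+108k+77=(6k+7)(6k+11)$.

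Setting this expression to zero gives precisely $\mu(k)=\frac{(1+k)(2+k)}{12(7+6k)(11+6k)}$, and since the coefficient of $Yq$ is affine and nonconstant in $\mu(k)$, this value is also sufficient; this establishes the stated equivalence. I do not anticipate a genuine obstacle, as the computation is elementary once the half-integral $q$-powers carried by $u$, $Y$ and $\Delta^{1/2}$ are tracked correctly. The one point to flag is that the literal constant term of $G_\mu(\varphi)$ is forced to vanish for all $\mu$, so the determination of $\mu(k)$ necessarily occurs one order further out, at the coefficient of $q$. This is in harmony with the Lax identity (\ref{lax2}): evaluated on $\varphi_2$ (for which $E\varphi_2=0$) it reads $E^{(2)}\bigl(G_\mu(\varphi_2)\bigr)=\mu(k)E_4\bigl(\tfrac{F}{12}-(k+1)\bigr)(\varphi_2)$, whose right-hand side has vanishing constant term, so the indicial equation of $E^{(2)}$ at the cusp forces the constant term of $G_\mu(\varphi_2)$ to vanish independently of $\mu$.
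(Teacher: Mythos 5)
Your proof is correct and is essentially the paper's own argument: expand $G_\mu(\varphi_2)$ at the cusp using (\ref{varphi2}), the recursion (\ref{equations-inductive}) and $\Delta^{-1/2}=u^{-1}\prod_{n\geq1}(1-q^n)^{-12}$, observe that the constant terms cancel for every $\mu$, and pin down $\mu(k)$ by the vanishing of the coefficient of $Yq$. Two of your side remarks in fact sharpen the printed proof: the sign you obtain for the $q$-coefficient of $\bigl(1-E_6\Delta^{-1/2}\sigma\bigr)(\varphi_2)$, namely $+\tfrac{12(6k+7)(6k+11)}{(k+1)(k+2)}$, is the correct one (the paper's intermediate display carries a spurious minus sign, which as written would force $\mu(k)$ to be the negative of (\ref{mu})), and your reading of the condition ``$G_\mu(\varphi)=Yo(1)$'' as vanishing through order $q$ is exactly the statement needed in the application to Lemma \ref{propo}, since the constant term vanishes identically in $\mu$.
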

\begin{proof}
This is straightforward using (\ref{varphi2}). We have
$$\sigma^2(\varphi_2)=qY\left(1+\frac{60(k+2)^2}{k+3}q+o(q)\right).$$
Moreover, we have, by the expansion $\Delta^{-\frac{1}{2}}=u^{-1}+12u+o(u^3)$
(which easily follows from, say, the product formula of $\Delta$),
$$\mu(k)\left(1-\frac{E_6}{\Delta^{\frac{1}{2}}}\sigma\right)(\varphi_2)=\mu(k)Y\left(-\frac{12(7+6k)(11+6k)}{(1+k)(2+k)}q+o(q)\right),$$ and the result follows.
\end{proof}
From now on we set $\mu(k)$ as in (\ref{mu}) and $G=G_\mu$.
\begin{Lemma}\label{propo}
We have $G(\varphi_2)=0$.\end{Lemma}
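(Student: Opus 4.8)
The plan is to use the two ``Lax type'' identities (\ref{lax1}) and (\ref{lax2}) to trap $G(\varphi_2)$ inside $\operatorname{Ker}(E^{(2)})$, and then to pin it down there by the $Y$-grading of $L$ together with the leading Fourier coefficient already computed in Lemma \ref{lemma-mu}. The linchpin is the action of $F$ on $\varphi_2$, so I would settle that first.

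First I would prove that $F$ acts on the line $\QQ(k)\varphi_2$ by the scalar $12(k+1)$. Identity (\ref{lax1}), which reads $FE-EF=4\Delta^{-\frac12}(E_2E_4+2E_6)\sigma E$, rearranges to $EF=(F-4\Delta^{-\frac12}(E_2E_4+2E_6)\sigma)E$; applying both sides to $\varphi_2$ and using $E(\varphi_2)=0$ gives $E(F(\varphi_2))=0$, so $F(\varphi_2)\in\operatorname{Ker}(E)=\QQ(k)\varphi_1\oplus\QQ(k)\varphi_2$. Now $F=(\cdots)\sigma$, and $\sigma(\varphi_2)$ together with every operation $D,\Delta^{-1/2},E_2,E_4,E_6$ entering $F$ produces only positive powers of $Y$, whereas $\varphi_1\in Y^{-1}\QQ(k)((u))$; hence the $\varphi_1$-component must vanish and $F(\varphi_2)=\beta\varphi_2$ for some $\beta\in\QQ(k)$. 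I would then read off $\beta$ from the leading $Yq^0$ coefficient, using $\sigma(\varphi_2)=Yu(1+O(q))$, $D(\sigma(\varphi_2))=\frac{k+1}{2}Yu(1+O(q))$ and $\Delta^{-1/2}=u^{-1}(1+O(q))$, which yields $\beta=1+(5+6k)+12\cdot\frac{k+1}{2}=12(k+1)$.

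With the eigenvalue in hand I would feed $\varphi_2$ into Lemma \ref{lemma-G}. Writing (\ref{lax2}) as $E^{(2)}G=GE+\mu(k)E_4(\frac{F}{12}-(k+1))$ and evaluating at $\varphi_2$, the summand $GE$ dies because $E(\varphi_2)=0$, and $(\frac{F}{12}-(k+1))(\varphi_2)=((k+1)-(k+1))\varphi_2=0$ by the previous step; thus $E^{(2)}(G(\varphi_2))=0$, that is $G(\varphi_2)\in\operatorname{Ker}(E^{(2)})$. Since $E^{(2)}=\sigma^2 E\sigma^{-2}$, this kernel equals $\QQ(k)\sigma^2(\varphi_1)\oplus\QQ(k)\sigma^2(\varphi_2)$. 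Every term of $G(\varphi_2)=\sigma^2(\varphi_2)-\mu(k)(1-\frac{E_6}{\Delta^{1/2}}\sigma)(\varphi_2)$ lies in $Y\QQ(k)((q))$ (only positive powers of $Y$), while $\sigma^2(\varphi_1)\in Y^{-1}\QQ(k)((q))$; hence $G(\varphi_2)=\gamma\,\sigma^2(\varphi_2)$ for some $\gamma\in\QQ(k)$. Finally $\sigma^2(\varphi_2)=Yq(1+O(q))$ has a non-zero coefficient at $Yq$, whereas Lemma \ref{lemma-mu} (with $\mu$ chosen as in (\ref{mu})) forces the coefficient of $G(\varphi_2)$ at $Yq$ to vanish; comparing these gives $\gamma=0$, whence $G(\varphi_2)=0$.

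The crux is the very first step, the identity $F(\varphi_2)=12(k+1)\varphi_2$. The membership $F(\varphi_2)\in\operatorname{Ker}(E)$ drops out of (\ref{lax1}), but it is the exact value $12(k+1)$ of the eigenvalue that makes the inhomogeneous term of (\ref{lax2}) annihilate $\varphi_2$: any other scalar would leave a non-zero multiple of $E_4\varphi_2$ on the right-hand side and break the conclusion. I therefore expect the delicate part to be the leading-coefficient bookkeeping that produces $12(k+1)$ exactly, namely tracking the $Yq^0$ term through $\Delta^{-1/2}$, the three summands of $F$, and the derivation $D$, even though each individual manipulation is elementary.
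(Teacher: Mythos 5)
Your proof is correct and follows essentially the same route as the paper: $F$ stabilises the line $\QQ(k)\varphi_2$ (the paper phrases this via Lemma \ref{lemma-F} and the one-dimensional space $V=\operatorname{Ker}(E)\cap Y\QQ(k)((u))$, you via (\ref{lax1}) and the $Y$-grading), the resulting eigenvalue kills the inhomogeneous term of (\ref{lax2}), and Lemma \ref{lemma-mu} then forces the remaining scalar in front of $\sigma^2(\varphi_2)$ to vanish. One remark in your favour: your value $F(\varphi_2)=12(k+1)\varphi_2$ is the one that actually makes $\bigl(\frac{F}{12}-(k+1)\bigr)(\varphi_2)=0$, so the paper's stated eigenvalue $\lambda=k+1$ should be read as the eigenvalue of $\frac{F}{12}$ (or as a typo), and your leading-coefficient bookkeeping settles this point correctly.
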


\begin{proof}
We set $V:=\operatorname{Ker}(E)\cap Y\QQ(k)((u))$; this is a subvector space over $\QQ(k)$ of $L$.
Note that $\varphi:=\varphi_2\in V$ and that $\varphi_1\not\in V$. Indeed otherwise we would have $\varphi_1\in Y\QQ(k)((u))$ contradicting the above-mentioned property that $Y,Y^{-1}$ are linearly independent over $ \QQ(k)((u))$. Hence $V$ is one dimensional, generated by $\varphi$. By Lemma \ref{lemma-F}, $F$ determines an endomorphism of $V=\QQ(k)\varphi$ and therefore $\varphi$ is an eigenvector of $F$ with eigenvalue $\lambda\in\QQ(k)$. By using (\ref{equations-inductive}) this can be easily computed:
$$\lambda=k+1.$$
By Lemma \ref{lemma-G}, $E^{(2)}(G(\varphi))=0$ and
we immediately see that $\sigma^{-2}(G(\varphi))\in\operatorname{Ker}(E)$. Hence,
$G(\varphi)\in\QQ(k)\sigma^2(\varphi)$. We write $G(\varphi)=\eta\sigma^2(\varphi)$ with $\eta\in\QQ(k)$.
By Lemma \ref{lemma-mu}, $\eta=0$.
\end{proof}

\begin{proof}[Proof of Theorem \ref{secondtheorem}] 
By using (\ref{equations-inductive}) and induction, we see that for all $i,j\geq 0$, $\sigma^i(c_j(k/2))\in\QQ(k)$
is regular at $k=0$. Hence, by Proposition \ref{prop12}, for all integer $i\geq 0$, writing 
$$\sigma^i(\varphi_2)|_{k=0}=u^i\sum_{j\geq 0}c_{i,j}q^j,$$ we have
$$f_{1,6i}\Delta^{-i/2}=u^i\sum_{i\geq 0}c_{i,j}q^j.$$ To finish our proof, all we need to show is that
if $p$ is a prime number dividing the denominator of $c_{i,j}$, then $p<6i$.
The property is obvious for $i=0$; indeed, one immediately sees that $\varphi_2|_{k=0}=1$.
For $i=1$ we know that $f_{1,6}=(E_2E_4-E_6)/720$ has $q$-expansion defined over $\ZZ[\frac{1}{2},\frac{1}{3},\frac{1}{5}]$ because $720=2^4\cdot 3^2\cdot 5$. Therefore 
$(\sigma(\varphi_2))|_{k=0}=f_{1,6}\Delta^{-\frac{1}{2}}$ has the required property (notice that $\Delta^{\frac{1}{2}}$ is defined over $\ZZ$).

We now suppose, by induction hypothesis, that for all $i'<i$, if $p$ divides the denominator of $c_{i',j}$ for some
$j$, then $p<6i'$. Since the $q$-expansion of $E_6\Delta^{-\frac{1}{2}}$ is defined over $\ZZ$ and does not depend on $k$, we have that
the $q$-expansion $u^i\sum_{j\geq 0}r_jq^j$ of $(\sigma^{i-2}(\varphi_2)-E_6\Delta^{-\frac{1}{2}}\sigma^{i-1}(\varphi_2))|_{k=0}$
is well defined and has rational coefficients whose primes dividing the denominators do not exceed $6i-6$. But now,
$$c_{i,j}=\frac{i(i-1)}{12(6i-1)(6i-5)}r_j,$$
which implies that if $p$ is a prime dividing the denominator of $c_{i,j}$, then $p<6i$. Notice that the argument must be sligthly
modified if $p=2,3$.
The proof of the Theorem now follows easily by the fact that $f_{1,6i}=\Delta^{i/2}(\sigma^i(\varphi_{2}))|_{k=0}$,
because, as previously noticed, the $q$-expansion of $\Delta^{\frac{1}{2}}$ is defined over $\ZZ$.
\end{proof}
\begin{Remark}{\em We have been unable to show that $f_{1,6i}$ has positive Fourier coefficients for $i\geq 0$. However, from (\ref{equations-inductive}), it is easy to deduce that the Fourier coefficients of 
$f_{1,6i}\Delta^{-i/2}$ are non-negative, for all $i\geq0$.}\end{Remark}

\medskip

We conclude the paper with a prediction. 
In \cite{KK2}, several examples are given, providing experimental evidences of the truth 
of Conjecture \ref{conj1}. We conducted similar numerical examples and we noticed that many normalised extremal quasi-modular forms $f_{l,w}$ with $l\leq 4$ {\em seem} to have integral Fourier coefficients. It is also apparent that this phenomenon ceases to hold for $l>4$. 
Let $\mathcal{E}_l$ be the set of integers $w$ such that $f_{l,w}\in\ZZ[[q]]$. We propose the following, based on numerical investigations we did.

\begin{Conjecture}
If $l\in\{1,2,3,4\}$, then $\mathcal{E}_l$ is an infinite set. If $l>4$, then $\mathcal{E}_l$ is a finite set.
\end{Conjecture}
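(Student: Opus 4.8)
The plan is to treat the two assertions separately and, in both, to pass from the forms $f_{l,w}$ to the linear differential equations they satisfy. Since the note develops the depth-one theory completely, I would first settle the conjecture for $l=1$ and then look for the higher-depth analogue of the contiguity structure of Lemmas \ref{lemma-F}--\ref{lemma-G}. The starting point is a clean reformulation of membership in $\mathcal{E}_1$: as in the proof of Theorem \ref{secondtheorem} one has $f_{1,6i}=q^{i}\,\eta\,\sum_{j\ge0}c_j(i/2)q^{j}$ with $\eta=\prod_{n\ge1}(1-q^n)^{12i}$ a unit of $\ZZ[[q]]$, so that $6i\in\mathcal{E}_1$ if and only if $c_j(i/2)\in\ZZ$ for every $j$. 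These numbers are governed either by the recursion (\ref{equations-inductive}), whose denominator at step $n$ is $n(n+i)$, or, more usefully, by the three-term contiguity recurrence in $i$ encoded by $G(\varphi_2)=0$ (Lemma \ref{propo}), in which the passage from weights $6(i-2),6(i-1)$ to weight $6i$ introduces exactly the factor $\mu(i-2)^{-1}=12(6i-1)(6i-5)/\big(i(i-1)\big)$, matching the identity $c_{i,j}=\frac{i(i-1)}{12(6i-1)(6i-5)}r_j$ used there.

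For the infinitude of $\mathcal{E}_1$ it is enough to produce an infinite set of weights $6i$ with all $c_j(i/2)$ integral, so I may stay within this residue class. Concretely one must show that the new factors $6i-1,6i-5$, together with the primes below $6i-6$, divide the numerators $r_j$ produced at the two preceding steps, for infinitely many $i$. I would attack this prime by prime: pulling the equation $D^2(X)=\tfrac{i^2}{4}E_4X$ back along a hauptmodul turns $\sum_jc_j(i/2)q^j$ into a Gauss hypergeometric series whose parameters are affine in $i$, so that Dwork-type congruences control the $p$-integrality of the $c_j(i/2)$ along $p$-adic discs of values of $i$. The target is to show that for each $p$ the good $i$ have positive $p$-adic density, and that these conditions can be met simultaneously along a single arithmetic progression; the interlacing criterion of Beukers--Heckman and Christol for integrality of hypergeometric series is the global counterpart, predicting precisely such progressions.

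For the finiteness of $\mathcal{E}_l$ when $l>4$ the key structural fact is that $l=4$ is the vanishing threshold of $\kappa_l$ (Lemma \ref{lemma-kappa}). When $\kappa_l=0$ the Wronskian $W(F_{f_{l,w}})$ attains the maximal order of vanishing $d((l+1)w)-1$ in $M_{(l+1)w}$, hence equals, up to a scalar, the explicit extremal modular form of its weight --- a monomial in $\Delta,E_4,E_6$ with integral $q$-expansion; this rigid integral structure of the Wronskian underlies the persistence of integral solutions for infinitely many $w$ in the range $l\le4$. As soon as $l>4$ one has $\kappa_l\ge1$ and $\nu_{\max}(l,w)<d((l+1)w)-1$, so the Wronskian is no longer pinned to the extremal form: it carries $\kappa_l$ accessory degrees of freedom whose values move with $w$. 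I would try to show that this loss of rigidity forces the Fourier coefficients of $f_{l,w}$ to have unbounded denominators for all large $w$, leaving only finitely many survivors; the recent unbounded-denominator theorems for solutions of modular linear differential equations with non-arithmetic monodromy are the natural instrument, the defect $\kappa_l$ measuring the obstruction to arithmeticity.

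The principal obstacle is the same in spirit on both sides: converting the combinatorial invariant $\kappa_l$ into a genuine global integrality statement. For the infinitude half the denominators are already bounded by Theorem \ref{secondtheorem}, so the whole difficulty lies in proving that they actually \emph{vanish} infinitely often --- that is, in upgrading positive $p$-adic density of good $i$ for each individual prime to an honest infinite set of $i$ good for all primes at once; mastering the interaction among the infinitely many primes $6i-1,6i-5,\dots$ is the crux, and here I expect the contiguity recurrence of Lemma \ref{propo} and its conjectural higher-depth analogues to be indispensable. For the finiteness half the delicate point is uniformity: one must exclude \emph{all} large $w$ rather than merely generic ones, which appears to demand an effective integrality criterion valid along the entire affine family of modular differential equations.
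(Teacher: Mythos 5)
The first thing to observe is that the statement you were asked to prove is not a theorem of the paper: it is an open conjecture, proposed ``based on numerical investigations'', and the paper offers no proof of either half (for $l>4$ the paper merely reports that no candidate element of $\mathcal{E}_l$ was found at all). So there is no proof in the paper to compare yours against, and your text must be judged as a research programme. As such it contains some correct observations --- the reduction of $6i\in\mathcal{E}_1$ to the integrality of all $c_j(i/2)$ via $f_{1,6i}=q^i\prod_{n\ge1}(1-q^n)^{12i}\sum_{j\ge0} c_j(i/2)q^j$ is right, as is the identification of the contiguity factor coming from $\mu(i-2)$ in the identity $c_{i,j}=\frac{i(i-1)}{12(6i-1)(6i-5)}r_j$ --- but it is not a proof: nearly every decisive step is announced (``I would attack'', ``the target is to show'', ``I expect'') rather than carried out, and both halves contain gaps that are not merely matters of detail.

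For the infinitude half, the fatal step is the local-to-global passage. Theorem \ref{secondtheorem} only bounds the primes occurring in denominators by $6i$, so the set of primes you must control grows with $i$; even granting that Dwork-type congruences produce, for each fixed prime $p$, a positive-$p$-adic-density set of good $i$, an intersection over infinitely many primes of such sets can perfectly well be finite or empty, and you supply no mechanism to prevent this. The interlacing criteria of Beukers--Heckman and Christol do not rescue the argument: they certify integrality of a hypergeometric series with \emph{fixed} rational parameters, whereas here the parameters move with $i$, so what you need is interlacing (or its analogue) for infinitely many $i$ simultaneously --- which is precisely the statement to be proved, in different clothing. For the finiteness half, the proposed instrument cannot apply at all: each $f_{l,w}$ is a polynomial in $E_2,E_4,E_6$ with rational coefficients, i.e.\ a quasi-modular form for the full congruence group $\operatorname{SL}_2(\ZZ)$, so its $q$-expansion automatically has \emph{bounded} denominators for every $l$ and $w$ (as the paper itself notes). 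Unbounded-denominator theorems for noncongruence forms or non-arithmetic monodromy therefore say nothing here; the relevant dichotomy is ``denominator equal to $1$'' versus ``denominator greater than $1$ but finite'', and no known criterion converts $\kappa_l\ge1$ into the latter for all large $w$. Your structural remark that $\kappa_l=0$ pins the Wronskian $W(F_{f_{l,w}})$ to the extremal modular form of weight $(l+1)w$ is correct and suggestive, but the inference from ``$\kappa_l\ge1$, hence accessory freedom'' to non-integrality is, at present, exactly as conjectural as the conjecture itself.
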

If $l>4$, we did not find any candidate for an element of $\mathcal{E}_l$.

\section{Appendix by G. Nebe: an example}

In 'small' weight, it is easy to show that an extremal quasi-modular form of depth $1$ has positive integral coefficients. For example, we have used that $f_{1,6}=\frac{D(E_4)}{240}\in\ZZ[\frac{1}{2},\frac{1}{3},\frac{1}{5}][[q]]$ (and the coefficients are positive) but even better this series is in $\ZZ[[q]]$ by (\ref{E2E4E6}). Apart from this and other simple examples, it is not easy to construct extremal quasi-modular forms in $\ZZ[[q]]$ with non-negative coefficients.
As Kaneko and Koike pointed out in \cite{KK2}, the normalised extremal quasi-modular form of weight $6$ and depth $3$
$$f_{3,6}=\sum_{i\geq 2}c_iq^i$$ (with $c_2=1$)
 has the following property. The coefficient $c_d$ is equal to the number of simply ramified coverings of genus $2$ and degree $d$ of an elliptic curve over $\CC$ hence providing another example of normalised extremal quasi-modular form with positive integral coefficients (see \cite{DZ,KZ}).
 
 In this appendix we show that these properties are also shared with the normalised extremal form $f_{1,14}$. Let us consider the {\em theta series} (of weight $12$) associated to the {\em Leech lattice} $\Lambda_{24}$:
$$\theta_{\Lambda_{24}}=E_{12}-\frac{65520}{691}\Delta$$
where $E_{12}$ denotes the normalised Eisenstein series of weight $12$. We have $\theta_{\Lambda_{24}}\in\ZZ[[q]]$ and the coefficients are non-negative. By using the well known fact that $\theta_{\Lambda_{24}}$ is an {\em extremal modular form} of weight $12$ and that $\nu_{\max}(1,14)=2$ (by Theorem \ref{firsttheorem} or by numerical computations),
we easily deduce that:
$$f_{1,14}=\frac{1}{393120}D(\theta_{\Lambda_{24}}).$$ Here, $$393120=2^5\cdot3^3\cdot 5\cdot 7 \cdot13$$ is twice the number of vectors of square norm $2$ in $\Lambda_{24}$
and we see that $f_{1,14}\in\ZZ[\frac{1}{2},\frac{1}{3},\frac{1}{5},\frac{1}{7},\frac{1}{13}][[q]]$ while the coefficients of the Fourier series are all non-negative, in agreement with Conjecture \ref{Conjecture-KK}.
But more is true.

\begin{Theorem}\label{meq1}
We have $f_{1,14}\in\ZZ[[q]]$.
\end{Theorem}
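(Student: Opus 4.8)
The plan is to read off the Fourier coefficients of $f_{1,14}$ from the closed form $f_{1,14}=\frac{1}{393120}D(\theta_{\Lambda_{24}})$ already established, and to reduce the integrality to classical congruences for the Ramanujan $\tau$-function. Writing $\theta_{\Lambda_{24}}=\sum_{m\geq 0}r(m)q^m$, the coefficient of $q^m$ in $f_{1,14}$ equals $m\,r(m)/393120$, so the task is to show $393120\mid m\,r(m)$ for every $m$. I would begin by inserting $\theta_{\Lambda_{24}}=E_{12}-\frac{65520}{691}\Delta$ together with $E_{12}=1+\frac{65520}{691}\sum_{m\geq1}\sigma_{11}(m)q^m$ and $\Delta=\sum_{m\geq1}\tau(m)q^m$, which gives $r(m)=\frac{65520}{691}\bigl(\sigma_{11}(m)-\tau(m)\bigr)$ for $m\geq1$. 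Substituting and using $393120=6\cdot65520$, the $m$-th coefficient of $f_{1,14}$ simplifies to
$$\frac{m\bigl(\sigma_{11}(m)-\tau(m)\bigr)}{6\cdot 691}.$$
Thus everything reduces to proving that $6\cdot 691$ divides $m\bigl(\sigma_{11}(m)-\tau(m)\bigr)$ for all $m\geq1$. Note that the primes $5,7,13$ present in the denominator of $f_{1,14}$ disappear automatically, since they occur in $65520$; this is why only $2$, $3$ and the exceptional prime $691$ survive.

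Divisibility by $691$ is immediate from Ramanujan's congruence $\tau(m)\equiv\sigma_{11}(m)\pmod{691}$; indeed this is precisely the statement that $\theta_{\Lambda_{24}}$ has integral coefficients. Since $\gcd(6,691)=1$, it then remains to check $6\mid m\bigl(\sigma_{11}(m)-\tau(m)\bigr)$, which I would split into the prime components $2$ and $3$. For the prime $2$: if $m$ is even there is nothing to prove, and if $m$ is odd I would use that $\sigma_{11}(m)\equiv\sigma_1(m)\pmod 2$ (because $d^{11}\equiv d\pmod 2$), that for odd $m$ the integer $\sigma_1(m)$ is odd precisely when $m$ is a perfect square, and that the classical congruence $\Delta\equiv\sum_{k\geq0}q^{(2k+1)^2}\pmod 2$ makes $\tau(m)$ odd precisely when $m$ is an odd perfect square; for odd $m$ these two conditions coincide, so $\sigma_{11}(m)\equiv\tau(m)\pmod2$. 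For the prime $3$: if $3\mid m$ there is nothing to prove, and if $3\nmid m$ I would invoke the congruence $\tau(m)\equiv\sigma_{11}(m)\pmod3$ valid for $m$ coprime to $3$. This one reduces to the primes $\ell\neq 3$, where $\tau(\ell)\equiv 1+\ell\equiv\sigma_{11}(\ell)\pmod 3$, and extends to all $m$ coprime to $3$ because $\tau$ and $\sigma_{11}$ satisfy the same Hecke recursion modulo $3$ away from $3$. Combining the three prime cases yields $6\cdot691\mid m\bigl(\sigma_{11}(m)-\tau(m)\bigr)$, whence $f_{1,14}\in\ZZ[[q]]$.

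The routine bookkeeping is the prime-by-prime matching of $\sigma_{11}$ and $\tau$; the only genuinely substantive input, and the main obstacle, is the congruence $\tau(m)\equiv\sigma_{11}(m)\pmod 3$ for $m$ coprime to $3$. Unlike the mod-$2$ statement, which admits a transparent description through perfect squares, this one is of Galois-theoretic origin, reflecting the reducibility of the mod-$3$ representation attached to $\Delta$ ($3$ being an exceptional prime for $\Delta$ in the sense of Serre and Swinnerton-Dyer). One should either cite it from the literature or verify it directly by comparing $\Delta$ with a weight-$12$ Eisenstein series modulo $3$. All the remaining steps are elementary.
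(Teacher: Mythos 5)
Your proof is correct, but it takes a genuinely different route from the paper's. The appendix proof (Nebe) never leaves the lattice: writing the coefficients of $D(\theta_{\Lambda_{24}})$ as $a|L_a|$, it proves the divisibility $393120\mid a|L_a|$ prime by prime using the automorphism group $2.Co_1$ of the Leech lattice --- fixed-point-free actions of Sylow $p$-subgroups for $p\in\{5,7,13\}$, a Magma computation of the low-index subgroups of a Sylow $3$-subgroup for the $3^3$ part, and an order-$16$ automorphism combined with a theorem of Heninger--Rains--Sloane on shells of Barnes--Wall lattices for the $2^5$ part. You instead eliminate the lattice altogether: substituting $\theta_{\Lambda_{24}}=E_{12}-\frac{65520}{691}\Delta$ and using $393120=6\cdot 65520$, the $m$-th coefficient becomes $m\bigl(\sigma_{11}(m)-\tau(m)\bigr)/(6\cdot 691)$, and integrality reduces to three classical congruences for $\tau$: Ramanujan's congruence mod $691$, the parity criterion ($\tau(m)$ odd exactly when $m$ is an odd square, via Jacobi's identity) matched against $\sigma_{11}(m)\equiv d(m)\pmod 2$ for odd $m$, and the mod $3$ congruence $\tau(m)\equiv\sigma_{11}(m)$ for $(m,3)=1$, which is classical (Ramanujan; Swinnerton--Dyer, $3$ being an exceptional prime for $\Delta$) and which your Hecke-recursion reduction to primes handles correctly. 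Your arithmetic checks out at every step, and the one ingredient you flag as needing a citation is indeed standard, so there is no gap. The trade-off: your argument is computer-free and rests only on citable congruences, and it explains the integrality conceptually through the exceptional primes $2$, $3$, $691$ of $\Delta$; the paper's argument, though partly computational, proves finer facts of independent interest (e.g.\ that $5\cdot 7\cdot 13\cdot 9$ divides the shell sizes $|L_a|$ themselves) and stays within the geometric, orbit-counting framework that matches Kaneko--Koike's prediction that these coefficients should be values of a counting function of geometric nature.
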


\begin{proof}
	To prove the theorem let $L:=\Lambda _{24}$ be the Leech lattice 
	and $A:=393120=2^5\cdot3^3\cdot 5\cdot 7 \cdot13 $.
	By the above 
	$$f_{1,14} = A^{-1} q \sum _{a=1}^{\infty } a|L_a| q^a  
	\mbox{ where }  L_a = \{ \lambda  \in L \mid \|\lambda  \|^2 = a \} .  $$
	To show the theorem we need to show that $A$ divides $a|L_a|$ for 
	all $a$. We do this prime by prime.  

	Let $G$ be the automorphism group of $L$. 
	Then $G\cong 2.Co_1$ 
	has order $|G|= 2^{22} 3^9 5^4 7^2 11 \cdot 13 \cdot 23 $ 
	and acts on the finite set $L_a$ for all $a>0$.
	For a subgroup $S\leq G$ and $\lambda  \in L_a$ we put 
	$$S\cdot \lambda  := \{ \sigma (\lambda  ) \mid \sigma \in S \} $$ 
	to denote the orbit of $\lambda  $ under $S$. 
	Then $L_a$ is a disjoint union of $S$-orbits and, 
	by Lagrange theorem, 
	$|S \cdot \lambda  | = \frac{|S|}{|U|} $ where 
	$U =  \{ \sigma \in S \mid \sigma (\lambda  ) = \lambda  \}$ is
	the stabiliser in $S$ of $\lambda  $. 

	To see that $5\cdot 7 \cdot 13$ divides $|L_a|$ let 
	$p\in \{5,7,13 \}$. Then the Sylow-$p$-subgroup $S$ of $G$ 
	acts fixed point freely on the non-zero vectors in $L$. 
	So the stabiliser $U$ of any $\lambda  \in L_a$ has index $> 1$ and 
	all $S$-orbits in $L_a$ have length a multiple of $p$.  

	Now let $S$ be a Sylow-$3$-subgroup of $G$, so $S\leq G$ has
	order $3^9$. 
	With Magma \cite{Magma}  we computed the low-index
	 subgroups of $S$ of index $1$, $3$, and $9$.
      For all 101 of these 102 subgroups  $U$, the sublattice of $L$
        consisting of all elements of $L$ that are
        fixed by all  generators of $U$ is $\{ 0 \}$.
	 For the other  subgroup
      $U$ (of index $9$ in $S$) this sublattice is isometric to $\ ^{(3)} A_2$.
      In particular all elements in the lattice have norm divisible by $3$.
      So whenever there is $\lambda  \in L_a$ such that
     $|S\cdot \lambda  | \leq 9$, then $|S\cdot \lambda  | = 9$ and $\lambda  $ lies
     in a sublattice of $L$ that is isometric to $\ ^{(3)} A_2$.
     In particular $a$ is a multiple of $3$. 
     This shows that $9 $ divides $|L_a|$ and $3^3 $ divides $a|L_a|$ for
     all $a>0$.

     To see the divisibility by $2^5$, we could argue the same way, but
     Magma failed to compute the low index subgroups of the Sylow-2-subgroup
     (of order $2^{22}$) of $G$ up to index 16. 
     Instead let $\sigma \in  G$ be an element of order 16 (it is not unique, but 
     the following holds for all three conjugacy classes). 
     Let 
     $$
	     K:=\{ \lambda  \in L \mid \sigma^8(\lambda  ) = \lambda  \} \mbox{ and } 
             N:= \{ \lambda  \in L \mid \sigma ^8(\lambda  )  = - \lambda  \}.
 $$
We compute that 
 $K= \ ^{(2)} E_8$ is the 
 rescaled Barnes-Wall lattice of dimension 8 
  and $N \cong \Lambda _{16}$ is the Barnes-Wall lattice of dimension 16. 
  By \cite[Theorem 14]{roots} all non-zero shells $K_a$ ($a>0$) have
  a cardinality divisible by $2\cdot 8 = 2^4$ and  
  similarly $2\cdot 16 = 2^5 $ divides $|N_a| $ for all $a>0$.
Note also that $K_a = \emptyset $, if $a$ is odd. 

Let $M_a:= L_a \setminus (K_a\cup N_a) $. 
Then 
$$M_a = \{ \lambda  \in L_a \mid |\langle \sigma \rangle \cdot \lambda  | = 16 
\mbox{ and } -\lambda  \not\in \langle \sigma \rangle \cdot \lambda  \} $$ 
is a disjoint union of an even number of $\langle \sigma \rangle $-orbits
of length 16.
In particular $M_a$ has a cardinality divisible by $2\cdot 16 = 2^5$. 
By the above $|N_a|$ is a multiple of $2^5$ and $K_a$ is either 
empty or $a$ is even and $|K_a|$ is a multiple of $2^4$. 
In total we have $a|L_a| = a|M_a| + a|N_a| + a|K_a| $ 
is a multiple of $2^5$.
\end{proof}

Note that the divisibility by $2^53^3$ of the coefficients of 
$D(\Theta _{\Lambda _{24}}) $ also follows from \cite[Theorem 1.2 and 1.3]{K}.

\end{document}